\newtheorem{theorem}{Theorem}[section]
\newtheorem{lemma}[theorem]{Lemma}
\newtheorem{proposition}[theorem]{Proposition}
\newtheorem{example}[theorem]{Example}
\newtheorem{definition}[theorem]{Definition}
\newtheorem{corollary}[theorem]{Corollary}
\newtheorem{remark}[theorem]{Remark}
\newtheorem{remarks}[theorem]{Remarks}
\newtheorem{notation}[theorem]{Notation}
\newcommand{\Cen}{{\rm C}}
\newcommand{\Aut}{\mbox{\rm Aut}}
\newcommand{\Inn}{\mbox{\rm Inn}}
\newcommand{\Z}{{\mathbb Z}}
\newcommand{\Q}{{\mathbb Q}}
\newcommand{\C}{{\mathbb C}}
\newcommand{\F}{{\mathbb F}}
\newcommand{\GL}{{\rm GL}}
\newcommand{\ind}{{\rm ind}}
\newcommand{\matriz}[1]{\begin{array} #1 \end{array}}
\newcommand{\pmatriz}[1]{\left(\begin{array} #1 \end{array}\right)}
\newcommand{\GEN}[1]{\langle #1 \rangle}
\newcommand{\U}{\mathcal{U}}
\newenvironment{proofof}{\par\noindent{\bf Proof of }}{\qed\par\bigskip}
\newcommand{\V}{\mathrm{V}}
\newcommand{\pa}[2]{\varepsilon_{#2}(#1)}
\newcommand{\diag}{\operatorname{diag}}
\newcommand{\Cl}{\operatorname{Cl}}
\newcommand{\qand}{\quad \text{and} \quad}
\title[Candidates to counterexamples to the Zassenhaus Conjecture]{An algorithm to construct candidates to counterexamples to the Zassenhaus Conjecture}
\author{Leo Margolis and Ángel del Río}
\thanks{This research is partially supported by the European Commission H2020 program under Grant 705112-ZC, the FWO (Research Foundation Flanders) by the Spanish Government under Grant MTM2016-77445-P with "Fondos FEDER" and, by Fundación Séneca of Murcia under Grant 19880/GERM/15.}
\keywords{Integral group ring, groups of units, Zassenhaus conjecture}
\subjclass[2010]{16U60, 16S34, 20C05, 20C10}
\begin{document}

\begin{abstract}
Let $G$ be a finite group, $N$ a nilpotent normal subgroup of $G$ and let $\mathrm{V}(\mathbb{\Z} G, N)$ denote the group formed by the units of the integral group ring $\mathbb{\Z} G$ of $G$ which map to the identity under the natural homomorphism $\mathbb{\Z} G \rightarrow \mathbb{\Z} (G/N)$.
Sehgal asked whether any torsion element of $\mathrm{V}(\mathbb{\Z} G, N)$ is conjugate in the rational group algebra of $G$ to an element of $G$.
This is a special case of the Zassenhaus Conjecture.

By results of Cliff and Weiss and Hertweck, Sehgal's Problem has a positive solution if $N$ has at most one non-cyclic Sylow subgroup.
We present some algorithms to study Sehgal's Problem when $N$ has at most one non-abelian Sylow subgroup.
They are based on the Cliff-Weiss inequalities introduced by the authors in a previous paper.
With the help of these algorithms we obtain some positive answers to Sehgal's Problem and use them to show that for units in $\mathrm{V}(\mathbb{\Z} G,N)$ our method is strictly stronger than the well known HeLP Method.
We then present a method to use the output of one of the algorithms to construct explicit metabelian groups which are candidates to a negative solution to Sehgal's Problem.
Recently Eisele and Margolis showed that some of the examples proposed in this paper are indeed counterexamples to the Zassenhaus Conjecture. These are the first known counterexamples. Moreover, we prove that every metabelian negative solution of Sehgal's Problem satisfying some minimal conditions is given by our construction.
\end{abstract}

\maketitle

\section{Introduction}

A famous conjecture by H.J.~Zassenhaus about the torsion units of integral group rings states:\\

\textbf{Zassenhaus Conjecture:} If $G$ is a finite group and $u \in \Z G$ is a unit of finite order in the integral group ring of $G$ then $u$ is conjugate in the rational group algebra $\Q G$ of $G$ to an element of the form $\pm g$ for some $g \in G$.\\

Though studied by many authors the conjecture remains open for wide classes of groups. One important result is Weiss' proof of the Zassenhaus Conjecture for nilpotent groups \cite{Weiss1991}. A particularly stubborn class of groups has been the class of metabelian group, see e.g. \cite{MarciniakRitterSehgalWeiss1987, Hertweck2008}, though here it is known the conjecture holds for cyclic-by-abelian groups \cite{CaicedoMargolisdelRio2013} or in some other special cases \cite{MarciniakRitterSehgalWeiss1987, MargolisdelRioCW1}. The main aim of this paper is to construct candidates to counterexamples to the Zassenhaus Conjecture in the class of metabelian groups, cf. Remark~\ref{RemFinal} for the reasons why we believed this to be good candidates.
Motivated by this observations Eisele and Margolis have proved recently that this construction indeed allows to find counterexamples to the conjecture \cite{EiseleMargolis}.

When studying a metabelian group $G$, with an abelian subgroup $N$ containing the commutator subgroup of $G$, proofs of the Zassenhaus Conjecture are usually divided into two parts: The torsion elements in the group $\V(\Z G,N)$ formed by the units mapping to the identity under the linear extension of the homomorphism $G \rightarrow G/N$ to $\Z G \rightarrow \Z (G/N)$ are studied separately from the other units.
This and the result of Weiss probably motivated S.K.~Sehgal to pose the following problem.  \\

\textbf{Sehgal's Problem:} \cite[Problem~35]{Sehgal1993}
Let $N$ be a normal nilpotent subgroup of the finite group $G$ and let $u$ be a torsion element of $\V(\Z G,N)$.
Is $u$ conjugate in the units of $\Q G$ to an element in $G$?\\

In that case we say that \emph{Sehgal's Problem has a positive solution for $G$ and $N$}.
In case $N$ is a nilpotent group we say that \emph{Sehgal' Problem has a positive solution for $N$} if it has a positive solution for every finite group $G$ and every normal subgroup of $G$ isomorphic to $N$.

Using results by Weiss \cite{Weiss1988}, Marciniak and Sehgal \cite{MarciniakSehgal2000} and Hertweck \cite{MargolisHertweck}, it follows that Sehgal's Problem has a positive solution if $N$ is a $p$-group or if $N$ is abelian and $[G:N] \leq 5$.
It is also the focus of \cite{MargolisdelRioCW1} where it is shown how results of Hertweck \cite{MargolisHertweck} and Cliff and Weiss \cite{CliffWeiss} can be combined to give a positive solution if $N$ has at most one non-cyclic Sylow subgroup.

In this paper we study how the methods from \cite{MargolisdelRioCW1} can be used to give algorithmically positive answers to Sehgal's Problem when $N$ has more than one non-cyclic Sylow subgroup. Though the methods presented in \cite{MargolisdelRioCW1} can in principle be applied to any nilpotent $N$ we will concentrate here on the case that $N$ possesses an abelian Hall $p'$-subgroup, since in this case the methods apply in a nicer way and we are especially interested in the class of metabelian groups.
Algorithms~\ref{AlgorithmLocalGN} and \ref{AlgorithmGlobalGN} provide tools to deal with Sehgal's Problem for some concrete $G$ and $N$ and Algorithm~\ref{AlgorithmLocalN} for some $N$ independently of $G$.

We first study in depth the cases where $N$ has two non-cyclic Sylow subgroups isomorphic to elementary-abelian groups of rank 2 and show how much our algorithms can prove here. On the positive side we obtain the following.

\begin{theorem}\label{Positive2-3-5} Assume $N$ is a nilpotent group which has exactly two non-cyclic Sylow subgroups and one of them is an elementary abelian $p$-group of rank $2$ with $p \leq 5$. Then Sehgal's Problem has a positive solution for $N$.
\end{theorem}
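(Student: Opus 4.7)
The plan is to apply Algorithm~\ref{AlgorithmLocalN}, which addresses Sehgal's Problem for a fixed $N$ independently of the ambient group $G$. Since $N$ is nilpotent, it decomposes as $N = P \times Q \times C$, where $P$ is the elementary abelian $p$-group of rank $2$ with $p \in \{2,3,5\}$, $Q$ is the other non-cyclic Sylow subgroup of $N$ at some prime $q \neq p$, and $C$ is the direct product of the remaining (cyclic) Sylow subgroups. In the setting of the paper we moreover may assume $Q$ to be abelian so that $N$ has an abelian Hall $p'$-subgroup, as this is the regime in which Algorithm~\ref{AlgorithmLocalN} is designed to operate.

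Next I would invoke the reduction from \cite{MargolisdelRioCW1}: verifying Sehgal's Problem for a torsion unit of order $n$ in $\V(\Z G,N)$ reduces, via the Cliff--Weiss inequalities, to an integer feasibility problem whose data are $N$ and $n$. For $n$ coprime to $p$ or coprime to $q$, the restriction of $N$ relevant to the inequalities has at most one non-cyclic Sylow subgroup, so the positive solution obtained in \cite{MargolisdelRioCW1} directly applies. The only cases left are therefore orders divisible by both $p$ and $q$.

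The heart of the argument is then the analysis of the Cliff--Weiss system when both non-cyclic Sylow subgroups contribute simultaneously. Because $P$ is elementary abelian of rank $2$, its subgroup lattice consists of only $p+3$ elements (the trivial subgroup, the $p+1$ subgroups of order $p$, and $P$ itself), which keeps the number of inequalities small and highly coupled. For $p \in \{2,3,5\}$ one shows that this system admits only the non-negative integer solutions already induced by elements of $G$, which is precisely the finite verification that Algorithm~\ref{AlgorithmLocalN} performs. Combined with the reductions above, this yields the desired positive solution for every such $N$.

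The main obstacle is controlling the Cliff--Weiss inequalities when the two non-cyclic factors mix; the strength of the constraints provided by $P$ decreases as $p$ grows, and for $p \geq 7$ they no longer suffice to rule out non-trivial solutions, which is precisely the phenomenon that later in the paper is exploited to construct candidate counterexamples. The cutoff $p \leq 5$ in the theorem is therefore exactly the largest prime for which the algorithmic check terminates with a positive answer, and the remainder of the proof reduces to running this check for the residual cases.
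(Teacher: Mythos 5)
Your high-level strategy (run Algorithm~\ref{AlgorithmLocalN}, strip off the cyclic Sylow factors, and finish with a finite check for $p\le 5$) is indeed the paper's strategy, but as written the proposal has two genuine gaps. First, you apply the Hall-subgroup reduction on the wrong side. Writing $N=P\times Q\times C$ with $P\cong C_p\times C_p$, you ask for the Hall $p'$-subgroup $Q\times C$ to be abelian and therefore ``assume'' $Q$ abelian. This is not a harmless normalization: the theorem makes no such hypothesis, and Theorem~\ref{TheoremAlgorithmLocalN} does not need it. Worse, it feeds the algorithm the wrong group: the sets $\mathcal{E}(\Aut(A),K,A)$ are computed for $A$ the abelian Hall subgroup, so if $A=Q\times C$ then the factor $P$ --- the only place where the hypothesis $p\le 5$ lives --- never enters the cocyclic-coset inequalities, and the check cannot succeed since $q$ and $Q$ are completely unconstrained. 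The correct move is to let $q$ be the prime of the second non-cyclic Sylow subgroup and take $A=N_{q'}=P\times C$, which is automatically abelian (no assumption on $Q$ whatsoever) and contains the $C_p\times C_p$ factor; Proposition~\ref{CyclicFactors} then reduces the computation to $A=C_p\times C_p$.

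Second, the mathematical core --- that $\mathcal{E}(\Aut(C_p\times C_p),K,C_p\times C_p)=\emptyset$ for every relevant $K$ when $p\in\{2,3,5\}$ --- is asserted rather than proved (``one shows that this system admits only the non-negative integer solutions\dots''). That statement is exactly Proposition~\ref{23and5}, and it is not a formality: one must run over conjugacy classes of subgroups $K\le\GL(2,p)$, use Lemma~\ref{InequalitiesAbelianL} to discard those containing the center, those of order less than $p$ or divisible by $p$, and the transitive ones, and then for $p=5$ explicitly solve the seven remaining systems of inequalities coming from Conditions (I) and (III) of Definition~\ref{DefinitionE}. Your count of ``$p+3$'' subgroups of $P$ is not the relevant finiteness; the case analysis runs over subgroups of $\GL(2,p)$ and over the affine lines of $\F_p^2$ (the minimal cocyclic cosets). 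Without this verification the proof is incomplete, and your closing claim that $p\le 5$ is ``exactly'' the cutoff is supported in the paper only by the explicit computation in Example~\ref{C7C7Example} for $p=7$, not by anything in your argument.
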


This will be a consequence of Propositions~\ref{CyclicFactors} and \ref{23and5}.
We then use this result to show that the algorithms presented here provide strictly stronger restrictions on the the torsion units in $\V(\Z G, N)$ than the restrictions of the well known HeLP Method, cf. Section~\ref{HeLPFails}. This also emphasizes why the orders in the counterexample in the construction of \cite{EiseleMargolis} are not divisible by a prime smaller than 7.

However a result like Theorem~\ref{Positive2-3-5} can not be achieved using our methods for $p \geq 7$, cf. Example~\ref{C7C7Example} and Section~\ref{SecConstruction}. More precisely we provide a general construction of groups for which our methods can not give a positive answer to Sehgal's Problem, cf. Subsection~\ref{SubSecConstrucion}.
This construction is based on the output of Algorithm~\ref{AlgorithmLocalN} and the constructed groups are metabelian such that their derived subgroup is the direct product of two cyclic groups of order $pq$ with $p$ and $q$ different primes.
Using an implementation of our algorithms we produce this output, cf. Tables~\ref{TableCyclic} and \ref{TableNonCyclic}, and realize this construction explicitly.

It turns out that some of the examples obtained in this way gives rise to negative solutions to Sehgal's Problem and hence to the first known counterexample to the Zassenhaus Conjecture.
This has been proved recently by Eisele and Margolis in \cite{EiseleMargolis}.
More precisely, they provide a list of conditions on a finite group $N\rtimes A$, with $N$ abelian, which imply the existence of a torsion unit in $\V(\Z G,N)$ not rationally conjugate to an element of $G$.
The most relevant of these conditions are given by the algorithms presented in this paper.
In fact, the construction of metabelian groups mentioned in the previous paragraph provides groups which satisfy these conditions and indeed their concrete counterexample is one of the groups constructed in Section~\ref{SecConstruction}.

\section{The algorithms}\label{SectionAlgorithms}

To introduce our algorithms and describe their use we need to establish some notation and recall some known results.

\subsection{Notation and preliminaries}

We use standard notation $C_n$, $D_n$ and $Q_n$ for cyclic, dihedral and quaternion groups of order $n$.

Let $g$ be an element of finite order in a group and $N$ be a finite nilpotent group.
For a set of primes $\pi$ write $g_{\pi}$ and $g_{\pi'}$ for the $\pi$-part and $\pi'$-part of $g$ and $N_{\pi}$ and $N_{\pi'}$ for the $\pi$-Hall and $\pi'$-Hall subgroups of $N$.
When $\pi=\{p\}$ we just write $g_p$, $N_p$, $g_{p'}$ or $N_{p'}$.

We use exponential notation for the action of a group on another group and, in particular, for the action of any group on itself by conjugation.

Let $G$ be a group acting on a group $M$. In many of the examples $G$ and $M$ will be subgroups of a given group such that $G$ normalizes $M$ and consider $G$ acting on $M$ by conjugation.
If $S\subseteq M$ then let $S^G = \{s^g : s\in S, g\in G\}$ and $\Cen_G(S)=\{g\in G : s^g=s \text{ for every } s\in S\}$.
In case $S=\{m\}$ we simply write $m^G$ and $\Cen_G(m)$, respectively.
The set $m^G$ is called the $G$-class of $m$ and we have $|m^G|=[G:\Cen_G(m)]$.
Two elements of $M$ are said to be $G$-\emph{conjugate} if they belong to the same $G$-class.
In case the elements have finite order then they are called \emph{locally $G$-conjugate} if their $p$-parts are $G$-conjugate for every prime $p$.
The equivalence class of this relation containing $m$ is called the \emph{local $G$-class} of $m$ and denoted $\ell_G(m)$.
Let $\Cl_G(M)$ denote the set of $G$-classes of $M$.
An integral $G$-class function on $M$ is a map $\varepsilon:M\rightarrow \Z$ which is constant in each $G$-class of $M$. In that case we consider $\varepsilon$ as map defined on $\Cl_G(M)$ by setting $\varepsilon(m^G)=\varepsilon(m)$.
Let  $\Inn_G(M)$ denote the set of automorphisms of $M$ given by $m\mapsto m^g$ for some $g\in G$. Clearly, $S^G=S^{\Inn_G(M)}$ and $\ell_G(m)=\ell_{\Inn_G(M)}(m)$.

Let $G$ be a finite group.
Call a normal subgroup $N$ of $G$ \textit{cocyclic} if $G/N$ is cyclic and call a coset of a cocyclic normal subgroup of $G$ a \textit{cocyclic coset}. A \textit{minimal cocyclic subgroup} of $G$ is a cocyclic normal subgroup not properly containing any other cocyclic normal subgroup of $G$ and a \textit{minimal cocyclic coset} is a coset of a minimal cocyclic subgroup of $G$.

We denote by $\V(\Z G)$ the group of units of augmentation $1$ in $\Z G$.
If an element $u \in \V(\Z G)$ is (locally) conjugate in the rational group algebra $\Q G$ to an element of $G$ one says that $u$ is \emph{(locally) rationally conjugate} to an element of $G$.

Let $u \in \Z G$ and let $u_g$ denote its coefficient at an element $g$ of $G$.
The partial augmentation of $u$ at $g$ (or at $g^G$) is
\[\pa{u}{g} = \pa{u}{g^G} = \sum_{x \in g^G} u_x. \]
This is a fundamental notion in the study of the Zassenhaus Conjecture because if $u$ is a torsion element of $\V(\Z G)$ then $u$ is rationally conjugate to an element in $G$ if and only if $\pa{u^d}{g} \geq 0$ for all $g \in G$ and $d \in \Z$ \cite[Theorem~2.5]{MarciniakRitterSehgalWeiss1987}.
We will use these results freely.

For a finite abelian group $A$ we define
$$h_A = \frac{\sum_{X\in \pi^-} \prod_{p\in \pi} (p^{k_p-\Delta_X(p)}-1)}{\prod_{p\in \pi} (p-1)p^{k_p-1}}$$
where $\pi$ denotes the set of primes dividing $|A|$, $\pi^-$ is the set of subsets of $\pi$ of odd cardinality, $\Delta_X$ denotes the characteristic function of $X$ as a subset of $\pi$ and $k_p$ is the number of factors in the factorization of $A_p$ as a direct product of non-trivial cyclic groups or equivalently $k_p$ is the rank of the socle of $A_p$.
Observe that $h_A=0$ if and only if $A$ is cyclic.
Moreover, if $B$ is cyclic of order coprime with $A$ then $h_A=h_{A\times B}$.

The next proposition collects some useful results about the partial augmentations of torsion units in $\V(\Z G,N)$.
The first one is due to Hertweck (see \cite{MargolisHertweck}), the second one is a consequence of Proposition~2.1 and Theorem~2.3 in \cite{MargolisdelRioPAP} and the third one is \cite[Corollary~3.5]{MargolisdelRioCW1}.
Finally, the fourth one follows from \cite[Lemma~5.1]{MargolisdelRioCW1}.
For that observe that if $m_A$ and $m_A^-$ are defined as in \cite[Section~4]{MargolisdelRioCW1} then $m_A\le m_A^-=|A|h_A$.

\begin{proposition}\label{PAFacts}
Let $G$ be a finite group, let $N$ be a nilpotent normal subgroup of $G$ and let $u$ be a torsion element of $\V(\Z G,N)$.
\begin{enumerate}
\item\label{HertweckPAdic}
The set
    \[\ell_{\Q G}(u) = \{n\in N : u  \text{ is locally rationally conjugate to } n \} \]
contains all $g \in G$ satisfying $\pa{u}{g} \neq 0$.
In particular, $u$ is locally rationally conjugate to an element $n$ of $N$ and hence if either $u$ is a $p$-element or $N$ is a $p$-group then $u$ is rationally conjugate to an element in $N$.

\item\label{Justu}
$u$ is rationally conjugate to an element $n$ of $G$ if and only if  $\pa{u}{g}\ge 0$ for every $g\in G$. In that case $n\in N$.

\item\label{CocyclicInequality} If $N_{p'}$ is abelian then for every cocyclic subgroup $K$ of $N_{p'}$ we have
$$\sum_{g \in nK} |\Cen_G(g)|\pa{u}{g} \geq 0.$$

\item\label{BoundPA} If there is a set of primes $\pi$ such that $A=N_{\pi}$ is abelian and $N_{\pi'}$ has at most one non-cyclic Sylow subgroup then for every $n\in N$ we have
        $$h_A\;[\Cen_G(n_{\pi'}):\Cen_G(n)]+\pa{u}{n}\ge 0.$$
\end{enumerate}
\end{proposition}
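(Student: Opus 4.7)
My plan is to dispatch the four items of the proposition in turn, since the statement is a compendium of results already in the literature; only item (4) requires a small calculation beyond citation.

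For item (1), I would invoke Hertweck's theorem from \cite{MargolisHertweck} directly: a torsion unit of $\V(\Z G,N)$ has all partial augmentations supported on elements that are locally rationally conjugate to an element of $N$. The two ``in particular'' clauses then fall out: picking any $g$ with $\pa{u}{g}\neq 0$ (such $g$ exists because partial augmentations sum to $1$) produces an $n\in N$ to which $u$ is locally rationally conjugate; when $u$ is a $p$-element the local condition trivially reduces to the rational one, and when $N$ is a $p$-group, looking at the $p'$-part of $u$ shows it is locally rationally conjugate to $1\in N$ and hence $u$ itself is a $p$-element, reducing again to the previous case.

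For (2), I would combine (1) with the Marciniak--Ritter--Sehgal--Weiss criterion \cite[Theorem~2.5]{MarciniakRitterSehgalWeiss1987}. The usual form requires $\pa{u^d}{g}\ge 0$ for all $d$, whereas (2) asks only $d=1$. The extra strength comes from Proposition~2.1 and Theorem~2.3 of \cite{MargolisdelRioPAP}, which supply the reduction specifically for elements of $\V(\Z G,N)$: nonnegativity of the partial augmentations of $u$ itself already forces nonnegativity of the partial augmentations of every power of $u$. Then MRSW concludes rational conjugacy to some $n\in G$, and (1) forces $n\in N$. Item (3) is taken verbatim from \cite[Corollary~3.5]{MargolisdelRioCW1}; nothing further is needed.

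For (4), I would invoke \cite[Lemma~5.1]{MargolisdelRioCW1}, which yields an inequality of the shape $m_A [\Cen_G(n_{\pi'}):\Cen_G(n)] + \pa{u}{n}\ge 0$ for a constant $m_A$ defined in Section~4 of that paper. The only genuine work is a bookkeeping step: expand $m_A^-$ as a sum indexed by odd-cardinality subsets of the prime set $\pi$ dividing $|A|$, and match this expansion term-by-term against the defining expression for $|A|h_A$ to confirm the identity $m_A^-=|A|h_A$; together with the bound $m_A\le m_A^-$ from the same section, this upgrades the Lemma~5.1 inequality into the cleaner form of (4). I expect this translation between the $m_A$-formalism of \cite{MargolisdelRioCW1} and the $h_A$-formalism used here to be the main (though purely notational) obstacle; no new conceptual input is required.
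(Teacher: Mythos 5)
Your proposal matches the paper's own treatment: the paper likewise proves this proposition purely by citation — Hertweck's result from \cite{MargolisHertweck} for (1), Proposition~2.1 and Theorem~2.3 of \cite{MargolisdelRioPAP} together with the Marciniak--Ritter--Sehgal--Weiss criterion for (2), \cite[Corollary~3.5]{MargolisdelRioCW1} for (3), and \cite[Lemma~5.1]{MargolisdelRioCW1} combined with the observation $m_A\le m_A^-=|A|h_A$ for (4). Your extra elaborations (the reduction of the $N$ a $p$-group case to the $p$-element case in (1), and the term-by-term matching of $m_A^-$ against $|A|h_A$ in (4)) are correct fillings-in of details the paper leaves implicit, not a different route.
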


Let $G$, $N$ and $u$ be as in Proposition~\ref{PAFacts} and consider $G$ acting on $N$ by conjugation.
Then $n\mapsto \pa{u}{n}$ is an integral $G$-class function on $N$ which vanishes outside of $\ell_{\Q G}(u)$.
Moreover, $\ell_{\Q G}(u)$ is determined by any of its element.
More precisely, if $m\in \ell_{\Q G}(u)$ then $\ell_{\Q G}(u) = \ell_G(m)$.
Fix a prime $p$.
Then the $p$-parts of the elements of $\ell_{\Q G}(u)$ form a $G$-class of $N$.
Fix $m\in \ell_{\Q G}(u)$ and let $H=\Cen_G(m_p)$.
Then $n\mapsto \pa{u}{m_pn}$ defines an integral $H$-class function on $N_{p'}$ which vanishes outside $\ell_G(m_{p'})$.
Next definition formalizes the properties satisfied by these maps when $u$ is not rationally conjugate to an element of $G$ and $N_{p'}=A$ such that $A$ is abelian.

?\begin{definition}\label{DefinitionE}
Let $G$ be a group acting on a finite abelian group $A$ and $H$ be a subgroup of $G$.
Then $\mathcal{E}(G,H,A)$ denotes the set of integral $G$-class functions $f$ of $A$ satisfying the following conditions:
\begin{enumerate}[(I)]
\item\label{SumOne} $\sum_{X\in \Cl_H(A)} f(X) = 1$.

\item\label{VanishOutSideLocalClass}
$f$ vanishes outside a local $G$-class of $A$.

\item\label{Inequation}
$\sum_{c\in C} |\Cen_H(c)| \; f(c)\ge 0$
for each minimal cocyclic coset $C$ of $A$.

\item\label{LowerBound}
If $\pi$ is the set of prime divisors $p$ of $|A|$ for which $A_p$ is cyclic then $f(a)\ge - h_A [\Cen_H(a_{\pi}):\Cen_H(a)]$ for every $a\in A$.

\item\label{Negative} $f(a)<0$ for some $a\in A$.
\end{enumerate}
\end{definition}

\begin{remarks}\label{RemarksOperatorE}
\begin{enumerate}
\item Conditions \eqref{SumOne} and \eqref{LowerBound} imply that $\mathcal{E}(G,H,A)$ is finite.

\item\label{CocyclicNotMinimal} The inequality in \eqref{Inequation} holds for every $f\in \mathcal{E}(G,H,A)$ and every cocyclic coset $C$ of $A$  (i.e not only for the minimal one). Indeed, let $C$ be a $K$-coset of $A$ for $K$ a cocyclic subgroup $A$. Let $L$ be a minimal cocyclic subgroup of $A$ contained in $K$. Then $C$ is the disjoint union of the cosets of $L$ contained in $C$. Thus the inequality in \eqref{Inequation} for $C$ is a consequence of several of such inequalities for $L$-cosets.

\item\label{InequalityAlternative} If $a_X$ denotes a representative of $X$ for each $X\in \Cl_H(A)$, then \eqref{Inequation} is equivalent to $\sum_{X\in \Cl_H(A)}|\Cen_H(a_X)|\cdot |X\cap C| \cdot f(X) \ge 0$.

\item\label{ACyclicEEmpty} If $f\in \mathcal{E}(G,H,a)$ and $f(a)<0$ then $a_{\pi'}\ne 1$ for every set of primes $\pi$ with $A_{\pi}$ cyclic. Otherwise $aA_{\pi'}\cap \ell_G(a)=\{a\}$ and hence applying Condition~\eqref{Inequation} to $C=aA_{\pi'}$ we obtain $|\Cen_G(a)| f(a) = \sum_{c\in C} |\Cen_G(c)| f(c) \ge 0$. In particular, if $A$ is cyclic then $\mathcal{E}(G,H,A)=\emptyset$.
\end{enumerate}
\end{remarks}

By Proposition~\ref{PAFacts}, if Sehgal's Problem has a negative solution for $G$ and $N$ and $N_{p'}$ is abelian then the set $\mathcal{E}(G,\Cen_G(y),N_{p'})$ is not empty for some $y\in N_p$.
So the strategy to attack Sehgal's Problem for $G$ and $N$ with $N$ a nilpotent normal subgroup such that $N_{p'}$ is abelian consists in proving that all these sets are empty.
Actually, these sets provides more information because their elements describe the partial augmentation of each torsion unit providing a negative solution for Sehgal's Problem.
This is exploded by our algorithms to construct minimal groups which are potential counterexamples to Sehgal's Problem and the partial augmentations of the torsion units providing such potential counterexamples.

\subsection{The algorithms}

Using Definition~\ref{DefinitionE} we can formulate algorithms which allow to solve Sehgal's Problem in some situations.

\begin{algorithm}[h]%
\label{AlgorithmLocalGN}
\caption{Sehgal's Problem for $G$ and $N$, with $N_{p'}$ abelian}
 \KwData{$G$, $N$, $p$:\\ \hspace{1cm} $G$ finite group, $N$ normal subgroup of $G$, $p$ prime with $N_{p'}$ abelian.}
 \KwResult{\texttt{true} if the algorithm can solve Seghal's Problem for $G$ and $N$. Otherwise, a list of integral $G$-class functions of $G$ covering all the negative solutions to Sehgal's Problem for $G$ and $N$.}
 Set $L$ to be an empty list. \\
 Let $Y$ be a set of representatives of $G$-classes inside $N_p$\\
 \For{$y\in Y$}{
 	Set $H:=\Cen_G(y)$.\\
 Set $E := \mathcal{E}(G,H, N_{p'})$.\\
For each $f\in E$ add to $L$ the map $\varepsilon:N\rightarrow \Z$ with \\
$\varepsilon(n)=f(n_{p'}^{g})$ if $n_p^g=y$ for some $g\in G$ and $\varepsilon(n)=0$, otherwise.
}
\eIf{$L$ is empty}{\texttt{return true}}{\texttt{return} $L$}			
\end{algorithm}

\begin{theorem}\label{TheoremAbelianHallAlgorithm}
Let $G$ be a group, $N$ a nilpotent normal subgroup of $G$ and $p$ a prime such that $N_{p'}$ is abelian.
If the output of Algorithm~\ref{AlgorithmLocalGN} is \texttt{true} then Sehgal's Problem has a positive solution for $G$ and $N$.
Otherwise, the output of Algorithm~\ref{AlgorithmLocalGN} is formed by a set of integral $G$-class functions of $G$ which contains all the maps $g\mapsto \pa{u}{g}$ with $u$ a torsion element of $\V(\Z G,N)$ which is not rationally conjugate to an element of $G$.

\end{theorem}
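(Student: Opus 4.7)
The plan is to prove the second (stronger) statement; the first follows by contrapositive with $L=\emptyset$. Suppose $u$ is a torsion element of $\V(\Z G,N)$ not rationally conjugate to an element of $G$. I want to produce $y\in Y$ and $f\in \mathcal{E}(G,\Cen_G(y),N_{p'})$ such that the corresponding $\varepsilon\in L$ coincides with the map $g\mapsto \pa{u}{g}$.

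By Proposition~\ref{PAFacts}(\ref{HertweckPAdic}) the nonzero partial augmentations of $u$ are supported on a single local $G$-class $\ell_G(m)\subseteq N$. Since the $p$-parts of the elements of $\ell_G(m)$ form one $G$-class, after conjugating $m$ I may assume $m_p=y$ for the unique $y\in Y$ representing $m_p^G$. Set $H=\Cen_G(y)$, $A=N_{p'}$, and define $f\colon A\to \Z$ by $f(a)=\pa{u}{ya}$. Because $(ya)^h=y\,a^h$ for every $h\in H$, $f$ is integer valued and constant on the $H$-orbits of $A$.

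The main content is verifying the five conditions of $\mathcal{E}(G,H,A)$. Condition~(\ref{VanishOutSideLocalClass}) is the observation that the $p'$-part of any $ya$ with $f(a)\ne 0$ is locally $G$-conjugate to $m_{p'}$; (\ref{Negative}) is Proposition~\ref{PAFacts}(\ref{Justu}) applied to $u$. For (\ref{SumOne}) I combine the identity $\sum_{C\in \Cl_G(N)}\pa{u}{C}=1$ (which follows from $u\in \V(\Z G,N)$) with the bijection $c^H\leftrightarrow (yc)^G$ between $\Cl_H(A)$ and the set of $G$-classes in $y^G\cdot A$, the latter accounting for all non-vanishing $G$-classes. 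For (\ref{Inequation}) I apply Proposition~\ref{PAFacts}(\ref{CocyclicInequality}) to a cocyclic $K\le A$ and to the element $ya_0$ (representing the coset $a_0K$); the key identity is $\Cen_G(ya_0k)=\Cen_H(a_0k)$, obtained from the fact that inside the nilpotent $N$ the $p$- and $p'$-parts of $ya_0k$ are $y$ and $a_0k$. Condition~(\ref{LowerBound}) is derived by applying Proposition~\ref{PAFacts}(\ref{BoundPA}) with the set of primes $\pi_0=\{q\ne p : A_q \text{ non-cyclic}\}$: the corresponding Hall subgroup of $N$ is $A_{\pi_0}$, abelian, its complement is $N_p\times A_\pi$ (with $\pi$ as in~(\ref{LowerBound})) whose only possibly non-cyclic Sylow is $N_p$, and the identity $h_A=h_{A_{\pi_0}}$ (since $A_\pi$ is cyclic of order coprime to $A_{\pi_0}$) together with $\Cen_G((ya)_{\pi_0'})=\Cen_H(a_\pi)$ delivers exactly the inequality in (\ref{LowerBound}).

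Granted $f\in \mathcal{E}(G,H,A)$, the map $\varepsilon$ that the algorithm appends to $L$ matches $g\mapsto \pa{u}{g}$: for $n\in N$ with $n_p^g=y$ one has $\varepsilon(n)=f(n_{p'}^g)=\pa{u}{y\,n_{p'}^g}=\pa{u}{n^g}=\pa{u}{n}$, while for $n$ with $n_p\notin y^G$ the support restriction forces $\varepsilon(n)=0=\pa{u}{n}$. Thus if the algorithm returns \texttt{true} no such $u$ can exist, and otherwise $L$ contains $g\mapsto \pa{u}{g}$ for every offending $u$. I expect the bookkeeping in condition~(\ref{LowerBound})—in particular the correct choice of $\pi_0$ and the reduction $h_A=h_{A_{\pi_0}}$ through the coprime-cyclic identity quoted before Proposition~\ref{PAFacts}—to be the subtlest point; the other verifications are essentially direct translations between centralizers in $G$ and in $H$.
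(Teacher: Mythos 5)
Your proposal is correct and follows essentially the same route as the paper: define $f(a)=\pa{u}{ya}$ for $y$ representing $m_p^G$ and verify the five conditions of Definition~\ref{DefinitionE} via the corresponding parts of Proposition~\ref{PAFacts}. The paper's own proof simply cites statements \eqref{HertweckPAdic}, \eqref{CocyclicInequality} and \eqref{BoundPA} for Conditions \eqref{VanishOutSideLocalClass}, \eqref{Inequation} and \eqref{LowerBound}, whereas you correctly spell out the bookkeeping (the bijection $\Cl_H(A)\leftrightarrow$ $G$-classes in $y^G A$, the centralizer identity $\Cen_G(ya)=\Cen_H(a)$, and the choice of $\pi_0$ with $h_A=h_{A_{\pi_0}}$); this is a faithful elaboration, not a different argument.
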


\begin{proof}
Let $u$ be a torsion element of $\V(\Z G,N)$ which is not rationally conjugate to an element in $G$ and let $m\in \ell_{\Q G}(u)$.
We have to show that the map $\varepsilon:n\mapsto \pa{u}{n}$ is one of the elements of the output of Algorithm~\ref{AlgorithmLocalGN} with input $G$, $N$ and $p$.
Let $Y$ be the set of representatives of the $G$-classes insided $N$ selected when performing the algorithm.
Then $m_p^G=y^G$ for some $y\in Y$. Let $H=\Cen_G(y)$.
For every $n\in N_{p'}$ let $f(n)=\varepsilon(yn_{p'})$.
Then $f$ is an $H$-class function and $\varepsilon(n)=f(n_{p'}^g)$ if $n_p^g=y$ for some $g\in G$ and $\varepsilon(n)=0$, otherwise.
We will prove that $f\in \mathcal{E}(G,H,N_{p'})$.
This will prove that $\varepsilon$ belongs to the output of the algorithm and the theorem will follow.

So we verify that $f$ satisfies the conditions of Definition~\ref{DefinitionE} for $A=N_{p'}$.
Conditions~\eqref{SumOne} and \eqref{Negative} are clear.
Conditions~\eqref{VanishOutSideLocalClass}, \eqref{Inequation} and \eqref{LowerBound}  are consequence of statements \eqref{HertweckPAdic}, \eqref{CocyclicInequality} and \eqref{BoundPA} of Proposition~\ref{PAFacts}.
\end{proof}

If $N$ is abelian then we can apply Algorithm~\ref{AlgorithmLocalGN} for every prime $p$ dividing the order of $N$ and take the intersection of their outputs.
Thus from Theorem~\ref{TheoremAlgorithmLocalN} we obtain at once the following:

\begin{corollary}
Let $G$ be a group and $N$ an abelian normal subgroup of $G$.
If the output of Algorithm~\ref{AlgorithmGlobalGN} with input $G$ and $N$ is \texttt{true} then Sehgal's Problem has a positive solution for $G$ and $N$.
Otherwise, the output of Algorithm~\ref{AlgorithmGlobalGN} is formed by a set of integral $G$-class functions of $G$ which contains all the maps $g\mapsto \pa{u}{g}$ with $u\in \V(\Z G,N)$ and $u$ not rationally conjugate to an element of $G$.

\begin{algorithm}[h]%
\caption{Sehgal's Problem for $G$ and $N$, with $N$ abelian}\label{AlgorithmGlobalGN}

\KwData{$G$, $N$: \\ \hspace{1cm} $G$ finite group, $N$ normal abelian subgroup of $G$.}

\KwResult{\texttt{true} if the algorithm can solve Seghal's Problem for $G$ and $N$. Otherwise, a list of integral $G$-class functions of $G$ covering all the negative solutions to Sehgal's Problem for $G$ and $N$.}

 Let $p_1,\dots,p_k$ be the primes dividing the order of $N$.\\
 \For{$i=1,\dots,k$}{
 Let $L_{\ell}$ be the output of Algorithm~\ref{AlgorithmLocalGN} with input $G$, $N$ and $p_i$.\\
 Set $L:=\begin{cases}L, & \text{if } i=1; \\ L\cap L_{\ell}, & \text{otherwise}.\end{cases}$\\
 \If{$L$ is empty}{\texttt{break} and\\ \texttt{return true}}
}
 {\texttt{return} $L$}
\end{algorithm}
\end{corollary}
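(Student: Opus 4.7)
The statement is essentially a transparent consequence of Theorem~\ref{TheoremAbelianHallAlgorithm}, applied prime by prime, and my plan is to reduce to that theorem. The key observation is that when $N$ itself is abelian, the Hall subgroup $N_{p'}$ is abelian for \emph{every} prime $p$ dividing $|N|$, so the hypothesis of Algorithm~\ref{AlgorithmLocalGN} (and therefore of Theorem~\ref{TheoremAbelianHallAlgorithm}) is satisfied for each such $p$. Consequently each iteration of the main loop of Algorithm~\ref{AlgorithmGlobalGN} is legitimate and returns a list $L_{\ell}$ with the property guaranteed by Theorem~\ref{TheoremAbelianHallAlgorithm}.

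The argument I would give then proceeds as follows. Let $u$ be a torsion element of $\V(\Z G,N)$ which is not rationally conjugate to any element of $G$, and let $\varepsilon\colon g\mapsto \pa{u}{g}$ be the associated integral $G$-class function on $G$. For each prime $p_i$ dividing $|N|$, Theorem~\ref{TheoremAbelianHallAlgorithm} applied to $G$, $N$ and $p_i$ ensures that $\varepsilon$ belongs to the output $L_{\ell}$ of Algorithm~\ref{AlgorithmLocalGN} on input $(G,N,p_i)$. Therefore $\varepsilon$ lies in the intersection $L=\bigcap_i L_{\ell}$ computed by Algorithm~\ref{AlgorithmGlobalGN}. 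This proves the second assertion: the output of Algorithm~\ref{AlgorithmGlobalGN} contains every such map $\varepsilon$.

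For the first assertion, I would argue by contrapositive: if Sehgal's Problem has a negative solution for $G$ and $N$, witnessed by some torsion $u\in \V(\Z G,N)$ not rationally conjugate to an element of $G$, then by the previous paragraph the associated $\varepsilon$ lies in $L$, so $L$ is non-empty and the algorithm does not return \texttt{true}. Equivalently, if the output is \texttt{true} then no such $u$ exists, which is precisely the positive solution of Sehgal's Problem for $G$ and $N$. There is no serious obstacle here; the only point to be a little careful about is that the conditional \texttt{break}/\texttt{return true} when $L$ becomes empty at some intermediate $i$ is harmless, because once $L$ is empty it remains empty under further intersection, and the argument above already shows that the ``true'' case forces the absence of counterexamples.
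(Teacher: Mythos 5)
Your argument is correct and is essentially the same as the paper's: the corollary is stated there as an immediate consequence of applying Theorem~\ref{TheoremAbelianHallAlgorithm} for each prime dividing $|N|$ and intersecting the resulting lists, which is precisely your prime-by-prime reduction (the paper's text cites Theorem~\ref{TheoremAlgorithmLocalN} at this point, but this is evidently a slip for Theorem~\ref{TheoremAbelianHallAlgorithm}, the result you correctly invoke). Your additional remarks --- that $N$ abelian makes $N_{p'}$ abelian for every $p$ so each loop iteration is legitimate, and that the early \texttt{break} is harmless --- fill in the details the paper leaves implicit.
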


Our third algorithm, with input an abelian group $A$, can give a positive solution to Sehgal's Problem for $N$ (independent of $G$) when $A$ is the Hall $p'$-subgroup of $N$ for some prime $p$.

\begin{theorem}\label{TheoremAlgorithmLocalN}
Let $A$ be a finite abelian group. Then for every finite nilpotent group $N$ such that $A\cong N_{p'}$ for some prime $p$ the following statements hold:
\begin{enumerate}
\item If the output of Algorithm~\ref{AlgorithmLocalN} with input $A$ is \texttt{true} then Sehgal's Problem has a positive solution for $N$.
\item Moreover, if $N$ is a normal subgroup of a finite group $G$ and $\V(\Z G,N)$ contains a torsion element $u$ which is not rationally conjugate to an element of $G$ then there is $n\in \ell_{\Q G}(u)$ and a pair $(\Inn_{\Cen_G(n_p)}(A),E)$ in the output of Algorithm~\ref{AlgorithmLocalN} with input $A$ such that the map defined on $A$ by $a\mapsto \pa{u}{n_pa}$ belongs to $E$. Moreover the partial augmentations of $u$ are completely determined by this map.
\end{enumerate}

\begin{algorithm}[h]\label{AlgorithmLocalN}
 \caption{Sehgal's Problem for $N$, with abelian Hall $p'$-subgroup $A$}
 \KwData{An abelian finite group $A$}
 \KwResult{\texttt{true} if the algorithm can solve Sehgal's Problem for every nilpotent group $N$ containing $A$ as a Hall $p'$-subgroup for some prime $p$.
 	Otherwise, a list formed by pairs $(K,E)$ with $E$ covering all the negative solutions $u$ to Sehgal's Problem for $G$ and $N$ with $N$ as above and $K=\Inn_{\Cen_G(n_{p'})}(A)$ for some $n\in \ell_{\Q G}(u)$.}
 Set $L$ to be an empty list.\\
 Set $S:=\Aut(A)$,\\
 Let $\mathcal{K}$ be a set of representatives of the conjugacy classes of subgroups of $S$.\\
 \For{$K$ in $\mathcal{K}$}
 {Set $E:=\mathcal{E}(S,K,A)$.\\
\If{$E \ne \emptyset$}{
Add $(K,E)$ to $L$.}
}
\eIf{$L$ is empty}{\texttt{return true}}
 {\texttt{return} $L$}			
\end{algorithm}
\end{theorem}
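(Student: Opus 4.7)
The plan is to deduce (1) from (2). If Sehgal's Problem has a negative solution for some nilpotent $N$ with $A \cong N_{p'}$, then by (2) some $\mathcal{E}(S, K, A)$ is non-empty (where $S := \Aut(A)$), so Algorithm~\ref{AlgorithmLocalN} appends a pair to $L$ and does not return \texttt{true}. Hence it suffices to prove (2).

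For (2), I mirror the proof of Theorem~\ref{TheoremAbelianHallAlgorithm}, replacing the ambient group $G$ by $S = \Aut(A)$. Let $u \in \V(\Z G, N)$ be a torsion unit not rationally conjugate to an element of $G$, and pick $n \in \ell_{\Q G}(u)$ (non-empty by Proposition~\ref{PAFacts}(1)). Set $y = n_p$, $H = \Cen_G(y)$, and fix an isomorphism $A \cong N_{p'}$; then define $f \colon A \to \Z$ by $f(a) = \pa{u}{ya}$, which is well-defined because $y$ and $a$ have coprime orders inside the nilpotent $N$ and so commute. The conjugation action of $H$ on $A$ factors through $\Aut(A) = S$ with image $K = \Inn_H(A) = \Inn_{\Cen_G(n_p)}(A) \le S$ (the equality depending on the chosen isomorphism), and $f$ is a $K$-class function on $A$.

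The heart of the argument is the verification $f \in \mathcal{E}(S, K, A)$. Conditions (II) and (V) are immediate from parts (1) and (2) of Proposition~\ref{PAFacts}: $f(a) \ne 0$ forces $ya \in \ell_{\Q G}(u)$, placing $a$ in the local $G$-class and hence local $S$-class of $n_{p'}$; and some $\pa{u}{g} < 0$ with $g \in N$ yields, after conjugating $g$ so its $p$-part becomes $y$, some $a \in A$ with $f(a) < 0$. Condition (I) uses the bijection between $H$-orbits of $A$ and the $G$-classes in $N$ whose $p$-part lies in $y^G$: each such $G$-class meets $yA$ in a single $H$-orbit since $H = \Stab_G(y)$. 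Summing, and using that $\sum_C \pa{u}{C} = 1$ while $\pa{u}{C} = 0$ for $C \not\subseteq \ell_{\Q G}(u)$, gives $\sum_{X \in \Cl_K(A)} f(X) = 1$. Condition (III) follows from Proposition~\ref{PAFacts}(3) applied to $n_0 = y a_0$ and each minimal cocyclic subgroup $K' \le A$: the inequality $\sum_{c \in a_0 K'} |\Cen_H(c)| f(c) \ge 0$, upon division by the positive constant $|\Cen_H(A)|$ (the kernel of $H \twoheadrightarrow K$), converts $|\Cen_H(c)|$ into $|\Cen_K(c)|$. For Condition (IV) I invoke Proposition~\ref{PAFacts}(4) with its $\pi$ chosen to be the set of primes $q$ for which $A_q$ is non-cyclic; then $N_\pi = A_\pi$ is abelian and $N_{\pi'} = N_p \cdot A_{\pi'}$ has only $N_p$ as a possibly non-cyclic Sylow, while $h_{A_\pi} = h_A$ by the coprime-cyclic identity noted just before the proposition. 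Taking $n = ya$ this yields $f(a) \ge -h_A [\Cen_H(a_{\pi'}) : \Cen_H(a)]$, which matches Definition~\ref{DefinitionE}(IV) whose $\pi$ corresponds to our $\pi'$ restricted to primes dividing $|A|$.

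To conclude, since $f \in \mathcal{E}(S, K, A) \ne \emptyset$, the $S$-conjugacy class of $K$ is represented in $\mathcal{K}$; by composing the chosen isomorphism $A \cong N_{p'}$ with a suitable $s \in S$ we may assume $K$ equals the representative, so that $(\Inn_{\Cen_G(n_p)}(A), E)$ appears in the output and $f \in E$. Finally, the partial augmentations of $u$ are completely determined by $f$: Proposition~\ref{PAFacts}(1) gives $\pa{u}{g} = 0$ for $g \notin \ell_{\Q G}(u)$, while for $g \in \ell_{\Q G}(u)$ one writes $g_p = y^h$ for some $h \in G$, whence $\pa{u}{g} = \pa{u}{g^{h^{-1}}} = f((g_{p'})^{h^{-1}})$. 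The main obstacle I anticipate is the bookkeeping in Conditions (III) and (IV): tracking centralisers consistently through the quotient $H \twoheadrightarrow K$, and especially picking the correct set of primes in Proposition~\ref{PAFacts}(4) (dual to the $\pi$ of Definition~\ref{DefinitionE}(IV)) so that the resulting lower bound has precisely the form required.
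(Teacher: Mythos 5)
Your proposal is correct and follows essentially the same route as the paper: the paper also deduces (1) from (2) and, for (2), reduces to the inclusion $\mathcal{E}(G,H,A)\subseteq \mathcal{E}(S,K,A)$ via exactly the facts you use ($a^H=a^K$, $|\Cen_H(a)|=|\Cen_K(a)|\,|\Cen_H(A)|$, and $\ell_G(a)\subseteq\ell_S(a)$). The only organizational difference is that the paper first invokes Theorem~\ref{TheoremAbelianHallAlgorithm} to place $f$ in $\mathcal{E}(G,H,A)$ and then transfers it to $\mathcal{E}(S,K,A)$, whereas you re-verify conditions (I)--(V) directly against $(S,K)$; the content is the same.
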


\begin{proofof}\textbf{ Theorem~\ref{TheoremAlgorithmLocalN}}.
The first statement is a consequence of the second one because the output of Algorithm~\ref{AlgorithmLocalN} is either \texttt{true} or a non-empty list.
So suppose that $u$ is a torsion element of $\V(\Z G,N)$ which is not rationally conjugate to an element in $G$.
By Theorem~\ref{TheoremAbelianHallAlgorithm}, the map $x\mapsto \pa{u}{x}$ belongs to the output of Algorithm~\ref{AlgorithmLocalGN} with input $G$, $N$ and $p$.
Then there is $y\in N_p$ and $f\in \mathcal{E}(G,H,A)$ with $H=\Cen_G(y)$ such that $\pa{u}{n}=f(n_{p'}^{g^{-1}})$ if $n_p=y^g$ for some $g\in G$ and $\pa{u}{n}=0$ otherwise.
In particular $y$ is the $p$-part of some element $n$ of $\ell_{\Q G}(u)$.
Let $K=\Inn_H(N_{p'})$, $S=\Aut(A)$ and $E=\mathcal{E}(S,K,A)$.
Then $a^H=a^K$, $|\Cen_H(a)|=|\Cen_K(a)|\;|\Cen_H(A)|$ and $\ell_G(a)\subseteq \ell_S(a)$ for every $a\in A$.
Using this it follows that $\mathcal{E}(G,H,A)\subseteq E$.
Hence $f\in E$.
Thus $n$ and $E$ satisfy the desired property, because $(K,E)$ belongs to the output of Algorithm~\ref{AlgorithmLocalN} with input $A$ and $\pa{u}{n_pa}=f(a)$ for every $a\in A$.
Hence $f$ determines the partial augmentations of $u$ because this describes those at the elements in $\ell_{\Q G}(u)$ and the partial augmentations vanish outside $\ell_{\Q G}(u)$.
\end{proofof}

In fact, by the following result, for positive solutions of Sehgal's Problem we can ignore cyclic Sylow subgroups of $A$ in Algorithm~\ref{AlgorithmLocalN}.

\begin{proposition}\label{CyclicFactors}
Let $A$ be an abelian group and $B$ a cyclic group of order coprime with $A$.
Algorithm~\ref{AlgorithmLocalN} returns \texttt{true} when applied to $A$ if and only if so does the algorithm when applied to $A \times B$.
\end{proposition}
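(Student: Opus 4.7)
The plan is to show that, writing $S = \Aut(A)$ and $S' = \Aut(A\times B) = S \times \Aut(B)$ (which factors as a direct product because $|A|$ and $|B|$ are coprime), one has
\[(\exists\, K \leq S : \mathcal{E}(S,K,A) \neq \emptyset) \iff (\exists\, K' \leq S' : \mathcal{E}(S',K',A\times B) \neq \emptyset),\]
by producing, in each direction, an explicit element of the other $\mathcal{E}$-set. Three structural ingredients will be used throughout: $h_{A\times B} = h_A$ (the identity stated before Proposition~\ref{PAFacts}); the $\Aut(B)$-orbits on $B$ coincide with the local $\Aut(B)$-classes because $B$ is cyclic, and hence $\ell_{S'}(a_0, b_0) = \ell_S(a_0) \times b_0^{\Aut(B)}$; and every subgroup of $B$ is cocyclic in $B$, so the minimal cocyclic subgroups of $A \times B$ are precisely $L \times 1$ with $L$ minimal cocyclic in $A$.

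For the direction $A$-side $\Rightarrow A\times B$-side, given $f \in \mathcal{E}(S,K,A)$ put $f'(a,1) := f(a)$ and $f'(a,b) := 0$ for $b\neq 1$. The $S'$-orbit of $(a,1)$ is $a^S \times \{1\}$, so $f'$ is a well-defined integral $S'$-class function; each of the five conditions of Definition~\ref{DefinitionE} for $f' \in \mathcal{E}(S', K\times\Aut(B), A\times B)$ reduces to the corresponding condition for $f$ after using the three ingredients above (the inequality (III) on $aL \times \{b\}$ is trivial for $b \neq 1$ and equal to $|\Aut(B)|$ times (III) for $f$ when $b = 1$, and the bound (IV) exploits $h_A = h_{A\times B}$).

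For the converse, fix $f' \in \mathcal{E}(S',K',A\times B)$ and choose $(a_0,b_0)$ with $\mathrm{supp}(f') \subseteq \ell_{S'}(a_0,b_0)$. By $S'$-invariance, $f'(a,b) = f'(a,b_0)$ whenever $b \in b_0^{\Aut(B)}$, so $f'$ is determined by $f : A \to \Z$, $f(a) := f'(a,b_0)$. Set $K := \mathrm{pr}_A\bigl(K' \cap (S \times \Stab_{\Aut(B)}(b_0))\bigr)$. Since $\Aut(B)$ is abelian, $\Stab_{\Aut(B)}(b) = \Stab_{\Aut(B)}(b_0)$ for every $b \in b_0^{\Aut(B)}$, so this $K$ is independent of the choice of representative. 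A direct centraliser computation gives $|\Cen_{K'}(c,b_0)| = |N| \cdot |\Cen_K(c)|$ for all $c \in A$, where $N := K' \cap (1 \times \Stab_{\Aut(B)}(b_0))$ depends neither on $c$ nor on the chosen representative. Conditions (II) and (V) for $f$ are immediate, while (III) applied to $aL \times \{b_0\}$ and (IV) applied at $(a,b_0)$ in the $f'$-version divide through by $|N|$ and, using $h_A = h_{A\times B}$, become the corresponding statements for $f$.

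The delicate step is (I). Let $O_0, O_1, \ldots, O_{r-1}$ be the $\mathrm{pr}_B(K')$-orbits on $b_0^{\Aut(B)}$, all of the same size because $\Aut(B)$ is abelian. For each $i$, the $K'$-orbits of $A \times B$ whose $B$-projection equals $O_i$ are in bijection with $\Cl_K(A)$: intersecting such an orbit with the slice $A \times \{b_i\}$ produces a $K$-orbit of $A$, and this is a bijection because $\mathrm{pr}_A(K' \cap (S \times \Stab_{\Aut(B)}(b_i))) = K$ for every $b_i$. On the orbit corresponding to $Y \in \Cl_K(A)$ the function $f'$ takes the value $f(a_Y)$. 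Summing over all $K'$-orbits thus yields
\[1 \;=\; \sum_{X \in \Cl_{K'}(A\times B)} f'(X) \;=\; r \sum_{Y \in \Cl_K(A)} f(Y).\]
Since both factors on the right are integers and $r \geq 1$, we force $r = 1$ and $\sum_Y f(Y) = 1$, which is (I) for $f$. The main obstacle is precisely this integrality-forces-$r=1$ argument; it also explains why $K$ must be the projection of the stabiliser of $b_0$ in $K'$ rather than $\mathrm{pr}_A(K')$, for otherwise the slice-by-slice correspondence would over-count.
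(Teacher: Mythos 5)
Your forward direction is fine and is essentially the paper's argument. The converse direction, however, has a genuine gap at exactly the step you flag as delicate. You assert that ``by $S'$-invariance, $f'(a,b)=f'(a,b_0)$ whenever $b\in b_0^{\Aut(B)}$,'' i.e.\ that $f'$ is constant on $\Aut(A\times B)$-orbits. But an element of $\mathcal{E}(S',K',A\times B)$ is in effect only required to be constant on $K'$-orbits: condition (I) sums over $\Cl_{K'}(A\times B)$, the motivating objects $a\mapsto\pa{u}{m_pa}$ are only $\Cen_G(m_p)$-class functions (this is precisely what the proof of Theorem~\ref{TheoremAbelianHallAlgorithm} verifies, and under a literal ``constant on $G$-orbits'' reading of Definition~\ref{DefinitionE} that theorem would fail), and the paper's own proof of this proposition treats the slices $f_i(a)=f'(ab_i)$ as possibly distinct $K$-class functions. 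Consequently your identity $1=\sum_{X}f'(X)=r\sum_{Y}f(Y)$, and the integrality argument forcing $r=1$, are unjustified. A concrete counterexample: take $A=C_7\times C_7$, $B=C_{11}$, $K'=C_{16}\times 1$, let $f_1$ be the element $(2,-1,0)$ of $\mathcal{E}(\Aut(A),C_{16},A)$ from Example~\ref{C7C7Example}, fix one generator $b_0$ of $B$, and set $f'(a,b_0)=f_1(a)$ and $f'=0$ on all other slices. This $f'$ satisfies (I)--(V) for $(S',K')$, yet $r=10$, the slices are unequal, and $\sum_X f'(X)=1\neq 10=r\sum_Y f(Y)$. (The proposition survives because this $f'$ happens to restrict to $f_1\in\mathcal{E}(\Aut(A),C_{16},A)$, but your derivation of (I) for $f$ does not.)

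What the converse actually requires, and what the paper does, is the following: apply condition (III) to the cocyclic cosets $A\times\{b_i\}$ to show that every slice-sum $\sum_{Y\in\Cl_K(A)}f_i(Y)$ is non-negative; since these non-negative integers add up to $1$, exactly one slice has sum $1$ and the rest have sum $0$; finally, if the negativity demanded by (V) occurs only in a sum-zero slice $f_i$, use $\sum_Y f_i(Y)=0$ to locate a class where $f_i$ is positive and add $1$ there, producing an element of $\mathcal{E}(\Aut(A),K,A)$. Your proof is missing this entire mechanism. The remainder of your converse --- choosing $K$ as the projection of the stabiliser of $b_0$ in $K'$, the centraliser computation $|\Cen_{K'}(c,b_0)|=|N|\cdot|\Cen_K(c)|$, and the verification of (II)--(V) for $f$ --- is sound and consistent with the paper.
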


\begin{proof}
Arguing by induction on the primes dividing the order of $B$ we may assume that $B$ is a non-trivial cyclic $p$-group. Let $S_A=\Aut(A)$, $S_B=\Aut(B)$ and $T=\Aut(A\times B)$.
Then $(a,b)^T=a^{S_A}\times b^{S_B}$, $b^{S_B}=\ell_{S_B}(b)$ and $\ell_T(a,b)=\ell_{S_A}(a)\times \ell_{S_B}(b)$ for every $a\in A$ and $b\in B$.
Moreover the minimal cocyclic cosets of $A\times B$ are the sets of the form $C\times \{b\}$ for $b\in B$ and $C$ a minimal cocyclic coset of $A$.
Finally, recall that $h_A=h_{A\times B}$.

Suppose that the output of Algorithm~\ref{AlgorithmLocalN} with input $A$ is not \texttt{true}.
Then $E=\mathcal{E}(S_A,K,A)$ is not empty for some subgroup $K$ of $S_A$.
Let $f\in E$ and let $H$ be the subgroup of $T$ formed by the automorphisms which are the identity on $B$ and whose restriction to $A$ belongs to $K$.
Let $g:A\times B\rightarrow \Z$ be given by $g(a,1)=f(a)$ and $g(a,b)=0$ for $b\ne 1$.
Then it is easy to see that $g\in \mathcal{E}(T,H,A\times B)$.
Thus the output of Algorithm~\ref{AlgorithmLocalN} with input $A\times B$ is not \texttt{true}.

Conversely suppose that the output of Algorithm~\ref{AlgorithmLocalN} with input $A\times B$ is not \texttt{true}.
Thus $\mathcal{E}(T,H,A\times B)$ contains an element $f$ for some subgroup $H$ of $T$.
Let $a\in A$ and $b\in B$ such that $f(ab)\ne 0$, so that $f$ vanishes outside $X=\ell_T(ab)$.
We classify the elements of $X$ by the $H$-classes of their $p$-parts, i.e. we consider a partition $X=\cup_{i=1}^t X_i$ where two elements $x$ and $y$ of $X$ are in the same $X_i$ if and only if $x_p^H=y_p^H$.
For every $i=1,\dots,r$ fix an element $b_i$ which is the $p$-part of an element of $X_i$.
As all the $b_i$'s are $T$-conjugate and $B$ is cyclic, they all generate the same group and hence $K=\Inn_{\Cen_H(b_i)}(A)$ is independent of $i$.
Let $a_{1},\dots,a_t$ be a set of representatives of the $K$-classes inside $X$.
Then $\{a_jb_i : j=1,\dots,t; i=1,\dots,r\}$ is a set of representatives of the $H$-classes inside $X$.
For $i=1,\dots,t$ let $f_i:A\rightarrow \Z$ be defined by $f_i(a)=f(ab_i)$.
Then $f_i$ is an $K$-class function,
	$$1=\sum_{x\in \Cl_H(A\times B)} f(x) =
	\sum_{i=1}^r \sum_{j=1}^t f(a_jb_i) =  \sum_{i=1}^r \sum_{j=1}^t f_i(a_j) = \sum_{i=1}^r \sum_{x\in \Cl_{K}(A)} f_i(x)$$
and, by Remark~\ref{RemarksOperatorE}.\eqref{InequalityAlternative}, the inequality in \eqref{Inequation}, for the definition of $\mathcal{E}(T,H,A\times B)$ takes the form
\begin{equation}\label{DoubleInequation}
\sum_{i=1}^r \sum_{j=1}^t |\Cen_{K}(a_j)| \cdot |(a_jb_i)^H \cap C| \cdot f_i(a_j)\ge 0.
\end{equation}
Recall that this holds for every cocyclic coset $C$ of $A\times B$ (see Remark~\ref{RemarksOperatorE}.\eqref{CocyclicNotMinimal}).

When we say that an $f_i$ satisfies one of the conditions (I)-(V) we refer to the definition of $\mathcal{E}(S_A,K,A)$. Clearly each $f_i$ satisfies condition \eqref{VanishOutSideLocalClass}.

\emph{Claim}: Every $f_i$ satisfies \eqref{Inequation} and \eqref{LowerBound}.
Moreover, there is $i_0\in \{1,\dots,r\}$ such that $f_{i_0}$ satisfies \eqref{SumOne} and for every $i\ne i_0$ the sum of the elements in the image of $f_i$ is $0$.

Indeed, recall that  $h_A=h_{A\times B}$, as $B$ is cyclic of order coprime with $|A|$.
Moreover, if $\pi$ is a set of primes $q$ dividing $|A|$ for which $A_q$ is cyclic and $a\in A$ then $[\Cen_H((ab_i)_{\pi\cup\{p\}}):\Cen_H(ab_i)]= [\Cen_K(a_{\pi}):\Cen_K(a)]$. Hence \eqref{LowerBound} follows.

Let $C$ be a cocyclic coset of $A$.
Then $b_iC$ is a cocyclic coset of $A\times B$,
$|(a_jb_i)^H\cap b_iC|=|a_j^{K}\cap C|$ and
if $i'\ne i$ then $(a_jb_{i'})^H\cap b_iC=\emptyset$.
Therefore \eqref{DoubleInequation} adopts the form
$$\sum_{x\in \Cl_{K}(A)} |x\cap C| \; f_i(x) = \sum_{j=1}^t |\Cen_{K}(a_j)| \cdot |a_j^{K} \cap C| \cdot f_i(a_j)\ge 0.$$
This proves that $f_i$ satisfies condition \eqref{Inequation}.

To prove the last statement of the claim we apply \eqref{DoubleInequation} to $C=b_iA$.
As $(a_jb_{i'})^H\cap C=\emptyset$ if $i\ne i'$ we obtain
\begin{eqnarray*}
	|K|\sum_{x\in \Cl_{K}(A)} f_i(x) &=& |K|\sum_{j=1}^t f_i(a_j) =
	\sum_{j=1}^t |\Cen_{K}(a_j)|\cdot |a_j^{K}| \cdot f_i(a_j) \\
	&=&
	\sum_{j=1}^t |\Cen_H(a_j b_i)|\cdot |(a_jb_i)^H\cap b_iA| \cdot f_i(a_j) \ge 0
\end{eqnarray*}
Thus each sum $\sum_{x\in \Cl_{K}(A)} f_i(x)$ is non negative. As the sum of these sums is 1, we deduce that one of them is $1$ and the others are all $0$.
This finishes the proof of the Claim.

If $f_{i_0}$ satisfies \eqref{Negative} then it belongs to $\mathcal{E}(S_A,K,A)$. Then the output of Algorithm~\ref{AlgorithmLocalN} with input $A$ is not \texttt{true}, as desired.
Otherwise $f_i$ satisfies \eqref{Negative} for some $i\ne i_0$, i.e. $f_i(x)<0$ for some $x\in \Cl_{K}(A)$.
As $\sum_{x\in \Cl_{K}(A)} f_i(x)=0$,  we have $f_i(x_0)>0$ for some $x_0\in \Cl_{K}(A)$.
Let $f':A\rightarrow \Z$ which acts as $f_i$ on $A_i\setminus x_0$ and  $f'(a)=f_i(a)+1$ for every $a\in x_0$. Then $f'\in \mathcal{E}(S_A,K,A)$ and hence the output of Algorithm~\ref{AlgorithmLocalN} is \texttt{true} also in this case.
This finishes the proof.
\end{proof}

For $N$ abelian we could also formulate a global version of Algorithm~\ref{AlgorithmLocalN}, i.e. apply Algorithm~\ref{AlgorithmLocalN} to all Hall $p'$-subgroups of $N$. If one of the outputs is \texttt{true} then Sehgal's Problem has a positive solution for $N$. Otherwise, the algorithm analyzes the compatibility of the different outputs. If there are not compatible outputs then again Sehgal's Problem has a positive solution for $N$.
Otherwise the compatible outputs provides partial augmentations for candidates to counterexamples to Sehgal's Problem for $N$.
We are not going to  formulate this algorithm explicitly.
However we will show some examples in Section~\ref{SecConstruction} of how this algorithm works in some special cases and use it to produce the examples mentioned in the introduction, where the strategy fails.

\section{Applications of the algorithms on small cases}

In this section we analyze the output of Algorithm~\ref{AlgorithmLocalN}. As a consequence we obtain some positive solutions to Sehgal's Problem.
We start collecting a list of properties of elements $f$ in $\mathcal{E}(G,H,A)$ for $G$ a finite group acting on an abelian group $A$ and $H$ a subgroup of $G$, and more concretely of the elements in $a\in A$ with $f(a)<0$.

\begin{lemma}\label{InequalitiesAbelianL}
Let $A$ be a finite abelian group, $G$ a group acting on $A$, $H$ a subgroup of $G$, $q$ a prime integer, $f\in \mathcal{E}(G,H,A)$ and $a\in A$ with $f(a)<0$.
Then the following statements hold:
\begin{enumerate}
\item\label{CocyclicInOrbitL} If $C$ is a cocyclic coset of $A$ then either $C\cap \ell_G(a)=\emptyset$ or $C\cap \ell_G(a)\not\subseteq a^H$.
\item\label{CyclicComplementL} If $A_{q'}$ is cyclic then
\begin{enumerate}

\item\label{AtLeastqL}
$[\Cen_H(a_{q'}) : \Cen_H(a)]\ge q$ and hence $[\Cen_H(a_{q'}) : \Cen_H(a_{q'}) \cap \Cen_H(A_q)]\ge q$.

\item\label{NotMultipleOfqL} If moreover $A_q\cong C_q^2$ then

(i) the center of $\Cen_{\Aut(A)}(A_{q'})$ is not contained in $\Inn_H(A)$ and

(ii) $[\Cen_H(a_{q'}) : \Cen_H(a_{q'}) \cap \Cen_H(A_q)]$ is not divisible by $q$ and, in particular, $[\Cen_H(a_{q'}) : \Cen_H(a)]$ is not divisible by $q$.
\end{enumerate}
\end{enumerate}
\end{lemma}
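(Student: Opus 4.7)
The plan is to deduce each statement from the defining conditions of $\mathcal{E}(G,H,A)$, making repeated use of the extension of inequality \eqref{Inequation} to arbitrary cocyclic cosets (Remark~\ref{RemarksOperatorE}.\eqref{CocyclicNotMinimal}) and of the observation that $a_q\neq 1$, which follows from Remark~\ref{RemarksOperatorE}.\eqref{ACyclicEEmpty} applied with $\pi$ the set of primes dividing $|A|$ other than $q$.

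For (1), I would argue by contradiction: assume $C\cap \ell_G(a)$ is nonempty and contained in $a^H$. Since $f$ vanishes outside $\ell_G(a)$ and both $f$ and $|\Cen_H(\cdot)|$ are constant on $a^H$ with values $f(a)<0$ and $|\Cen_H(a)|$, the sum in \eqref{Inequation} collapses to $|C\cap\ell_G(a)|\cdot|\Cen_H(a)|\cdot f(a)<0$, a contradiction. For (2)(a), Remark~\ref{RemarksOperatorE}.\eqref{ACyclicEEmpty} forces $A$ to be non-cyclic, so $A_q$ is non-cyclic with socle rank $k_q\geq 2$; hence the set $\pi$ in \eqref{LowerBound} consists of the primes dividing $|A|$ other than $q$, giving $a_\pi=a_{q'}$ and $h_A=h_{A_q}$. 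A short computation from the closed form yields $(q-1)h_{A_q}=1-q^{1-k_q}<1$, so $1/h_A>q-1$; together with $f(a)\le -1$, condition \eqref{LowerBound} forces $[\Cen_H(a_{q'}):\Cen_H(a)]\geq 1/h_A>q-1$, hence $\geq q$. The second inequality in (a) is immediate from $\Cen_H(A_q)\subseteq\Cen_H(a_q)$, so $\Cen_H(a_{q'})\cap\Cen_H(A_q)\subseteq\Cen_H(a)$.

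For (2)(b)(i), I would argue by contradiction, assuming every element of the centre $Z(\Cen_{\Aut(A)}(A_{q'}))$, that is, every scalar automorphism acting as $\lambda\in\F_q^*$ on $A_q$ and trivially on $A_{q'}$, equals conjugation by some $h_\lambda\in H$. Each such $h_\lambda$ then lies in $\Cen_H(a_{q'})$ and sends $a_q$ to $\lambda a_q$, so $\F_q^*\cdot a_q\subseteq a_q^{\Cen_H(a_{q'})}$. The subgroup $\langle a_q\rangle$ is cocyclic in $A$ since $A/\langle a_q\rangle\cong C_q\times A_{q'}$ is cyclic, so $C=\langle a_q\rangle\times\{a_{q'}\}$ is a cocyclic coset containing $a$. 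One verifies directly that $C\cap\ell_G(a)=(\langle a_q\rangle\setminus\{0\})\times\{a_{q'}\}\subseteq a^H$, contradicting (1).

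For (2)(b)(ii), suppose $q\mid|K|$ with $K=\Inn_{\Cen_H(a_{q'})}(A_q)\le\GL_2(\F_q)$. Every Sylow $q$-subgroup of $K$ is cyclic of order $q$, generated by a unipotent element with a unique one-dimensional fixed line in $A_q$. If some Sylow $q$-subgroup has fixed line $V$ with $a_q\notin V$, then the orbit of $a_q$ under that subgroup is the entire coset $a_q+V$, so $a_q+V\subseteq a_q^{\Cen_H(a_{q'})}$; applying (1) to the cocyclic coset $C=(a_q+V)\times\{a_{q'}\}$ (a coset of $V\times 0$, which is cocyclic as $A/(V\times 0)\cong C_q\times A_{q'}$ is cyclic) yields $C\cap\ell_G(a)=C\subseteq a^H$, again contradicting (1). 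Otherwise every Sylow $q$-fixed line contains $a_q\neq 0$ and hence equals $\langle a_q\rangle$, so $K$ has a unique (normal) Sylow $q$-subgroup and its fixed line $V=\langle a_q\rangle$ is $K$-invariant; then $a_q^K\subseteq V\setminus\{0\}$ has at most $q-1$ elements, contradicting $|a_q^K|=[\Cen_H(a_{q'}):\Cen_H(a)]\geq q$ from (a). The in-particular part then follows because $[\Cen_H(a_{q'}):\Cen_H(a)]$ divides $|K|$. The main obstacle is this second subcase of (ii): only by combining the uniqueness and normality of the Sylow $q$-subgroup with the lower bound from (a) and the linearity of the $K$-action on $A_q$ does the rank-$2$ structure of $C_q^2$ enter essentially.
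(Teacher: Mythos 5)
Your proof is correct and follows essentially the same route as the paper's: part (1) via collapsing the cocyclic inequality \eqref{Inequation}, part (2a) via the closed form of $h_A$ together with condition \eqref{LowerBound}, part (2b)(i) via the cocyclic coset $a\GEN{a_q}$ (which you phrase contrapositively), and part (2b)(ii) via the order-$q$ unipotent acting on $A_q\cong C_q^2$ combined with parts (1) and (a). The only cosmetic difference is in (ii), where the paper forces $a_q$ outside the fixed line by replacing $a$ with a suitable $\Cen_H(a_{q'})$-conjugate and then computes $a_q^g=a_q^iz$ with $i=1$, whereas you split into two cases and dispose of the ``$a_q$ in every fixed line'' case by the orbit-size bound $|a_q^K|\le q-1<q$ against (a).
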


\begin{proof}
\eqref{CocyclicInOrbitL}
Suppose that $C\cap \ell_G(a)\subseteq a^H$.
If $c\in C$ and $f(c)\ne 0$ then $c\in \ell_G(a)$, by \eqref{VanishOutSideLocalClass}, and hence $c\in a^H$ by assumption. Thus $c^H=a^H$ and, as $f$ is an $H$-class function, we have $f(c)=f(a)$.
Hence, by \eqref{Inequation}, $0\le \sum_{c\in C} |\Cen_G(c)| f(c) = \sum_{c\in C\cap \ell_G(a)} |\Cen_G(c)| f(b) = f(a)\sum_{c\in C\cap \ell_G(a)} |\Cen_G(c)|$. Then $C\cap \ell_G(a)=\emptyset$, since $f(a)<0$.

\eqref{CyclicComplementL}
Suppose that $A_{q'}$ is cyclic.

\eqref{AtLeastqL}
Suppose that $A_q$ is the direct product of $k$ non-trivial cyclic subgroups.
As $\mathcal{E}(G,H,A)\ne\emptyset$, $A_q$ is not cyclic and $a_q\ne 1$, by Remark~\ref{RemarksOperatorE}.\eqref{ACyclicEEmpty}.
Thus $k\ge 2$.
Then
$h_A=\frac{q^{k-1}-1}{q^{k-1}(q-1)}$ and hence,
by \eqref{LowerBound}, we have
$$[\Cen_H(a_{q'}) : \Cen_H(a)]\ge -\frac{f(a)}{h_A}=
-\frac{(q-1)q^{k-1}}{q^{k-1}-1}f(a)\ge \frac{(q-1)q^{k-1}}{q^{k-1}-1}>q-1.$$

\eqref{NotMultipleOfqL} Suppose that $A_q\cong C_q^2$.

(i) Applying \eqref{CocyclicInOrbitL} to $C=a\GEN{a_q}$ we deduce that there is $c\in a\GEN{a_q}\cap \ell_{G}(a)\setminus a^H$, because clearly $a\GEN{a_q}\cap \ell_G(a)\ne\emptyset$.
As $c_{q'}=a_{q'}$ and $c_q$ and $a_q$ are elements of $\GEN{a_q}$ with the same order, we have $c=a_q^ka_{q'}$ for some $k$ coprime with $q$.
Let $\sigma$ be the automorphism of $A$ which acts as the identity on $A_{q'}$ and $\sigma(x)=x^k$ for $x\in A_q$. Then $\sigma$ is central in $\Cen_{\Aut(A)}(A_{q'})$ and $c=a^{\sigma}\not\in a^H$. Thus $\sigma\not\in \Inn_H(A)$.

(ii) Let  $K=\Cen_H(a_{q'})$ and $U = K \cap \Cen_H(A_q)$, a normal subgroup of $K$.
By means of contradiction suppose that $q$ divides $[K:U]$.
Then $K\setminus U$ contains a $q$-element $g$ with $g^q \in U$.
Then $A_q\rtimes \GEN{g}$ is a $q$-group which contains $A_q$ as a normal non-central subgroup.
This implies that $|\Cen_{A_q}(g)|=q$.
By Condition~\eqref{LowerBound} we have
$|a_q^K| =|a^K|= [K:\Cen_H(a)]\ge q=|\Cen_{A_q}(g)|$.
It follows that $a_q^K$ contains an element outside of $\Cen_{A_q}(g)$ and we may assume without loss of generality that $a_q\not \in \Cen_{A_q}(g)$.
Hence $A_q=\GEN{a_q}\times \Cen_{A_q}(g)$ and $a_q^g=a_q^iz$ for some $1\le i<q$ and a generator $z$ of $\Cen_{A_q}(g)$.
Then $a_q=a_q^{g^q} = a_q^{i^q}z^{1+i+\dots+i^{q-1}}$.
So $i\equiv i^q\equiv 1 \mod q$ and therefore $i=1$.
This implies that $a_{p'}\GEN{z}\subseteq a_{p'}^{\Cen_H(a_p)}$.
Then $\pa{u}{a}\ge 0$, by \eqref{CocyclicInOrbitL}. This contradicts the choice of $a$.
\end{proof}

Combining Theorem~\ref{TheoremAbelianHallAlgorithm} with Lemma~\ref{InequalitiesAbelianL} we obtain the following.

\begin{corollary}
If $\pa{n}{u}<0$ for some torsion element $u$ of $\V(\Z G,N)$ with $N$ a nilpotent subgroup of $G$ such that $N_{p'}$ is abelian and $n\in N$ then statements (1) and (2) of Lemma~\ref{InequalitiesAbelianL} holds for $A=N_{p'}$, $G$ acting on $A$ by conjugation, $a=n_{p'}$ and $H=\Cen_G(n_p)$.
\end{corollary}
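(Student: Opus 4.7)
The plan is to reduce directly to Lemma~\ref{InequalitiesAbelianL} by producing, from the given $u$ and $n$, an element of $\mathcal{E}(G,H,A)$ with $A=N_{p'}$ and $H=\Cen_G(n_p)$, whose value at $a=n_{p'}$ is negative.

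First I would use Proposition~\ref{PAFacts}\eqref{Justu}: since $\pa{u}{n}<0$, the unit $u$ cannot be rationally conjugate to an element of $G$. Next, by Proposition~\ref{PAFacts}\eqref{HertweckPAdic}, $\pa{u}{n}\ne 0$ forces $n\in \ell_{\Q G}(u)$, so I may take the element $m$ appearing in the proof of Theorem~\ref{TheoremAbelianHallAlgorithm} to be $n$ itself, and set $y=n_p$.

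Now I invoke Theorem~\ref{TheoremAbelianHallAlgorithm}: applying Algorithm~\ref{AlgorithmLocalGN} to $G$, $N$ and $p$, the argument there constructs, for this choice of $y$ and $H=\Cen_G(y)$, a function $f:N_{p'}\to\Z$ in $\mathcal{E}(G,H,N_{p'})$ defined by $f(a')=\pa{u}{ya'}$ for $a'\in N_{p'}$. In particular $f(n_{p'}) = \pa{u}{n}<0$, so the hypothesis $f(a)<0$ of Lemma~\ref{InequalitiesAbelianL} is satisfied at $a=n_{p'}$.

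At this point both statements of Lemma~\ref{InequalitiesAbelianL} apply verbatim to this $f$, $A=N_{p'}$, $H=\Cen_G(n_p)$ (with $G$ acting on $A$ by conjugation), and $a=n_{p'}$; no further work is required. There is no real obstacle to the argument: the only thing one has to verify is the translation of data (checking that $\pa{u}{n}<0$ indeed falls within the scope of Theorem~\ref{TheoremAbelianHallAlgorithm} and that the function produced there genuinely satisfies $f(n_{p'})=\pa{u}{n}$), which is immediate from Proposition~\ref{PAFacts}\eqref{HertweckPAdic}--\eqref{Justu} and from the explicit formula for $f$ given in the proof of Theorem~\ref{TheoremAbelianHallAlgorithm}.
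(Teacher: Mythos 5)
Your proposal is correct and follows the paper's own route: the paper gives no separate written proof, only the sentence ``Combining Theorem~\ref{TheoremAbelianHallAlgorithm} with Lemma~\ref{InequalitiesAbelianL} we obtain the following,'' which is exactly the reduction you carry out (using Proposition~\ref{PAFacts} to see that $u$ is not rationally conjugate to an element of $G$ and that $n\in\ell_{\Q G}(u)$, extracting $f\in\mathcal{E}(G,\Cen_G(n_p),N_{p'})$ with $f(n_{p'})=\pa{u}{n}<0$ from the proof of Theorem~\ref{TheoremAbelianHallAlgorithm}, and then applying the lemma). The only cosmetic remark is that in the theorem's proof $y$ is a fixed $G$-class representative of $n_p$ rather than $n_p$ itself, but since all conditions are conjugation-invariant your choice $y=n_p$ is harmless.
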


\begin{proposition}\label{23and5}
Let $A$ be an abelian group which has a cocyclic Sylow subgroup isomorphic to $C_q\times C_q$ with $q=2,3$ or $5$.
Then the output of Algorithm~\ref{AlgorithmLocalN} with input $A$ is \texttt{true}.
\end{proposition}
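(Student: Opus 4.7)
The plan is to reduce to the case $A = C_q \times C_q$ via Proposition~\ref{CyclicFactors}, and then derive a contradiction from Lemma~\ref{InequalitiesAbelianL} combined with the subgroup structure of $\GL_2(\F_q)$.

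Since $A_q \cong C_q \times C_q$ is cocyclic in $A$, the Hall $q'$-subgroup $A_{q'}$ is cyclic. Iterating Proposition~\ref{CyclicFactors} over the prime divisors of $|A_{q'}|$, it suffices to show $\mathcal{E}(S, K, A) = \emptyset$ for every subgroup $K$ of $S := \Aut(A) = \GL_2(\F_q)$ when $A = C_q \times C_q$. Suppose for contradiction that $f \in \mathcal{E}(S, K, A)$ and pick $a \in A$ with $f(a) < 0$; Remark~\ref{RemarksOperatorE}.\eqref{ACyclicEEmpty} applied with $\pi = \emptyset$ forces $a \neq 0$. Since $A_{q'} = 1$ is cyclic, $a_{q'} = 1$, and $\Cen_K(A) = 1$ (as $K \leq \Aut(A)$), Lemma~\ref{InequalitiesAbelianL}.\eqref{CyclicComplementL} applied at the prime $q$ yields three constraints: $|a^K| \geq q$, $|K|$ is coprime to $q$, and $Z := Z(\GL_2(\F_q))$ is not contained in $K$.

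For $q = 2$, the center $Z$ is trivial, so the last constraint is impossible. For $q = 3$, the first two constraints force $|K|$ to be a power of $2$ with $|K| \geq 4$, so up to conjugation $K$ sits inside a Sylow 2-subgroup of $\GL_2(\F_3)$, which is the semidihedral group $SD_{16}$ with center $\langle -I \rangle$. Inspecting the subgroup lattice of $SD_{16}$ one verifies that every subgroup of order at least 4 contains $-I$: every element of order 4 in $SD_{16}$ squares to $-I$, and both Klein four subgroups of $SD_{16}$ contain $-I$; consequently the three index-two subgroups and $SD_{16}$ itself also contain $-I$. This contradicts the third constraint.

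The case $q = 5$ is the main obstacle. Here $|K|$ divides $96 = 2^5 \cdot 3$ with $|K| \geq 6$, and $K$ avoids at least one scalar of order 4. To rule these out I would further invoke Lemma~\ref{InequalitiesAbelianL}.\eqref{CocyclicInOrbitL}: for every line $L$ through $a$ in $A = \F_5^2$, the stabilizer $\Stab_K(L)$ acts on $L \setminus \{0\} \cong \F_5^\ast$ through a subgroup of $C_4$, and $a^K \cap L \subsetneq L \setminus \{0\}$ yields $|a^K \cap L| \in \{1, 2\}$. Combining this constraint across the six lines of $A$ with the cocyclic-coset inequalities~\eqref{Inequation}, the bound $f(a) \geq -|a^K|/5$ from~\eqref{LowerBound}, and an analysis of the image $\bar K \leq \mathrm{PGL}_2(\F_5) \cong S_5$ in its exceptional 6-point action (either $\bar K$ fixes a line --- in which case $K$ sits in a Borel subgroup, forcing $|K|$ to be a 2-power dividing 16 --- or $\bar K$ acts without fixed points), one exhausts the finitely many conjugacy classes of candidate $K \leq \GL_2(\F_5)$ and checks that each violates some of the conditions~(I)--(V).
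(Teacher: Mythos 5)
Your reduction via Proposition~\ref{CyclicFactors} and your translation of Lemma~\ref{InequalitiesAbelianL} into the three constraints on $K$ (namely $|a^K|\ge q$, $q\nmid|K|$, and $Z(\GL_2(\F_q))\not\le K$) are exactly what the paper does, and your treatment of $q=2$ and $q=3$ is complete: for $q=2$ the triviality of the centre kills everything, and for $q=3$ your verification that every subgroup of the semidihedral Sylow $2$-subgroup of order at least $4$ contains $-I$ is a correct (and slightly more explicit) version of the paper's one-line observation that any subgroup of order greater than $3$ not containing the centre has order divisible by $3$.

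The gap is the case $q=5$, which is the entire substance of the proposition (and the reason the bound in Theorem~\ref{Positive2-3-5} stops at $5$: for $q=7$ the analogous statement is \emph{false}, cf.\ Example~\ref{C7C7Example}). Your structural constraints do not eliminate the remaining candidates: up to conjugacy there are seven non-transitive subgroups $K\le\GL_2(\F_5)$ of order at least $6$, coprime to $5$, not containing the full centre (isomorphic to $C_6$, $D_6$, $Q_8$, $D_8$, $C_4\times C_2$, $C_3\rtimes C_4$, $D_{12}$), and several of them --- e.g.\ the order-$6$ subgroup of the Singer cycle --- pass every test you list, including the line conditions from Lemma~\ref{InequalitiesAbelianL}.\eqref{CocyclicInOrbitL}. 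For those groups $\mathcal{E}(S,K,A)=\emptyset$ is not witnessed by the failure of any single structural condition; rather, conditions \eqref{SumOne}, \eqref{Inequation} and the sign constraints from Lemma~\ref{InequalitiesAbelianL} assemble into a concrete linear system (one per $K$, with one variable per $K$-orbit and one inequality per cocyclic coset) whose solution set must be checked to contain no vector with a negative entry, so that \eqref{Negative} fails. The paper carries out precisely this computation, listing the seven subgroups, their orbit pictures on $\F_5^2$, and the resulting inequality systems. Your proposal defers this to ``one exhausts the finitely many conjugacy classes \dots and checks,'' which is the step that actually proves the proposition; as written, the $q=5$ case is a plan rather than a proof.
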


\begin{proof}
Let $S=\Aut(A)$.
By Proposition~\ref{CyclicFactors} it is enough to prove the statement for $A=C_q\times C_q$ and for that we have to show that $E(K)=\mathcal{E}(S,K,A)$ is empty for $K$ running on a set of representatives of conjugacy classes of subgroups of $S$.
To transfer the calculation to linear algebra we identify $A$ with the underlying additive group of $\F_q^2$ and hence we identify $S$ and $\GL(2,q)$.
Lemma~\ref{InequalitiesAbelianL} implies that this holds if $K$ contains the center of $S$ or $|K|$ is smaller than $q$ or $|K|$ is divisible by $q$.
Of course if the action of $K$ on $A$ is transitive then $E(K)=\emptyset$.
This allow us to display the elements of $A$ in an $\F_q$-plane on which we visualize the minimal cocyclic cosets as lines.

If $q=2$ or $q=3$ then any subgroup of $S$ not containing its center and of order greater than $q$ has order divisible by $q$.
So we only have to consider the case $q=5$.
Observe that there are, up to conjugacy, 7 non-transitive subgroups in $S$ not containing its center and of order greater than $5$ and not divisible by $5$, namely those listed in Table~\ref{Ks}.

\begin{table}[h]
\begin{align*}
K_1 &= \left\langle\begin{pmatrix} 0 & 1 \\ -1 & 1 \end{pmatrix} \right\rangle,
&
K_2 &= \left\langle\begin{pmatrix} -1 & 1 \\ 0 & 1 \end{pmatrix}, \begin{pmatrix} -1 & 1 \\ -1 & 0 \end{pmatrix} \right\rangle,
\\
K_3 &= \left\langle\begin{pmatrix} -2 & -2 \\ 0 & 2 \end{pmatrix}, \begin{pmatrix} -2 & 0 \\ -1 & 2 \end{pmatrix} \right\rangle,
&
K_4 &= \left\langle\begin{pmatrix} 1 & 0 \\ -2 & -1 \end{pmatrix}, \begin{pmatrix} 1 & 1 \\ 0 & -1 \end{pmatrix} \right\rangle,
\\
K_5 &= \left\langle\begin{pmatrix} -1 & 0 \\ 0 & -1 \end{pmatrix}, \begin{pmatrix} 2 & 0 \\ -1 & 1 \end{pmatrix} \right\rangle,
&
K_6 &= \left\langle\begin{pmatrix} 2 & -2 \\ 0 & -2 \end{pmatrix}, \begin{pmatrix} 0 &  -1 \\ 1 & -1 \end{pmatrix} \right\rangle,
\\
K_7 &= \left\langle\begin{pmatrix} 1 & -1 \\ 0 & -1 \end{pmatrix}, \begin{pmatrix} 0 & 1 \\ -1 & 1 \end{pmatrix} \right\rangle.
\end{align*}
\caption{\label{Ks}}
\end{table}

\begin{figure}[h]
	$$\matriz{{cccc}
		\boxed{\xygraph{
				!{<0cm,0cm>;<0.4cm,0cm>:<0cm,0.4cm>::}
				!{(-2,-2)}*+{4}
				!{(-1,-2)}*+{4}
				!{(0,-2)}*+{2}
				!{(1,-2)}*+{3}
				!{(2,-2)}*+{2}
				!{(-2,-1)}*+{4}
				!{(-1,-1)}*+{3}
				!{(0,-1)}*+{1}
				!{(1,-1)}*+{1}
				!{(2,-1)}*+{3}
				!{(-2,0)}*+{2}
				!{(-1,0)}*+{1}
				!{(1,0)}*+{1}
				!{(2,0)}*+{2}
				!{(-2,1)}*+{3}
				!{(-1,1)}*+{1}
				!{(0,1)}*+{1}
				!{(1,1)}*+{3}
				!{(2,1)}*+{4}
				!{(-2,2)}*+{2}
				!{(-1,2)}*+{3}
				!{(0,2)}*+{2}
				!{(1,2)}*+{4}
				!{(2,2)}*+{4}
		}} &
		\boxed{\xygraph{
				!{<0cm,0cm>;<0.4cm,0cm>:<0cm,0.4cm>::}
				!{(-2,-2)}*+{6}
				!{(-1,-2)}*+{6}
				!{(0,-2)}*+{\mathbf{2}}
				!{(1,-2)}*+{5}
				!{(2,-2)}*+{\mathbf{4}}
				!{(-2,-1)}*+{6}
				!{(-1,-1)}*+{5}
				!{(0,-1)}*+{\mathbf{1}}
				!{(1,-1)}*+{\mathbf{3}}
				!{(2,-1)}*+{5}
				!{(-2,0)}*+{\mathbf{4}}
				!{(-1,0)}*+{\mathbf{3}}
				!{(1,0)}*+{\mathbf{1}}
				!{(2,0)}*+{\mathbf{2}}
				!{(-2,1)}*+{5}
				!{(-1,1)}*+{\mathbf{1}}
				!{(0,1)}*+{\mathbf{3}}
				!{(1,1)}*+{5}
				!{(2,1)}*+{6}
				!{(-2,2)}*+{\mathbf{2}}
				!{(-1,2)}*+{5}
				!{(0,2)}*+{\mathbf{4}}
				!{(1,2)}*+{6}
				!{(2,2)}*+{6}
		}}
		&
		\boxed{\xygraph{
				!{<0cm,0cm>;<0.4cm,0cm>:<0cm,0.4cm>::}
				!{(-2,-2)}*+{\mathit{1}}
				!{(-1,-2)}*+{\mathit{3}}
				!{(0,-2)}*+{\mathit{2}}
				!{(1,-2)}*+{\mathit{2}}
				!{(2,-2)}*+{\mathit{3}}
				!{(-2,-1)}*+{\mathit{2}}
				!{(-1,-1)}*+{\mathit{1}}
				!{(0,-1)}*+{\mathit{2}}
				!{(1,-1)}*+{\mathit{3}}
				!{(2,-1)}*+{\mathit{3}}
				!{(-2,0)}*+{\mathit{1}}
				!{(-1,0)}*+{\mathit{1}}
				!{(1,0)}*+{\mathit{1}}
				!{(2,0)}*+{\mathit{1}}
				!{(-2,1)}*+{\mathit{3}}
				!{(-1,1)}*+{\mathit{3}}
				!{(0,1)}*+{\mathit{2}}
				!{(1,1)}*+{\mathit{1}}
				!{(2,1)}*+{\mathit{2}}
				!{(-2,2)}*+{\mathit{3}}
				!{(-1,2)}*+{\mathit{2}}
				!{(0,2)}*+{\mathit{2}}
				!{(1,2)}*+{\mathit{3}}
				!{(2,2)}*+{\mathit{1}}
		}}	
		&
		\boxed{\xygraph{
				!{<0cm,0cm>;<0.4cm,0cm>:<0cm,0.4cm>::}
				!{(-2,-2)}*+{\mathbf{2}}
				!{(-1,-2)}*+{\mathit{5}}
				!{(0,-2)}*+{\mathbf{4}}
				!{(1,-2)}*+{\mathbf{4}}
				!{(2,-2)}*+{\mathit{5}}
				!{(-2,-1)}*+{\mathbf{3}}
				!{(-1,-1)}*+{\mathbf{1}}
				!{(0,-1)}*+{\mathbf{3}}
				!{(1,-1)}*+{\mathit{5}}
				!{(2,-1)}*+{\mathit{5}}
				!{(-2,0)}*+{\mathbf{2}}
				!{(-1,0)}*+{\mathbf{1}}
				!{(1,0)}*+{\mathbf{1}}
				!{(2,0)}*+{\mathbf{2}}
				!{(-2,1)}*+{\mathit{5}}
				!{(-1,1)}*+{\mathit{5}}
				!{(0,1)}*+{\mathbf{3}}
				!{(1,1)}*+{\mathbf{1}}
				!{(2,1)}*+{\mathbf{3}}
				!{(-2,2)}*+{\mathit{5}}
				!{(-1,2)}*+{\mathbf{4}}
				!{(0,2)}*+{\mathbf{4}}
				!{(1,2)}*+{\mathit{5}}
				!{(2,2)}*+{\mathbf{2}}
		}}
		\\
		K_1\cong C_6 & K_2\cong D_6 & K_3\cong Q_8 & K_4 \cong D_8\\
		\boxed{\xygraph{
				!{<0cm,0cm>;<0.4cm,0cm>:<0cm,0.4cm>::}
				!{(-2,-2)}*+{\mathbf{3}}
				!{(-1,-2)}*+{5}
				!{(0,-2)}*+{5}
				!{(1,-2)}*+{5}
				!{(2,-2)}*+{5}
				!{(-2,-1)}*+{4}
				!{(-1,-1)}*+{\mathbf{2}}
				!{(0,-1)}*+{4}
				!{(1,-1)}*+{4}
				!{(2,-1)}*+{4}
				!{(-2,0)}*+{\mathbf{1}}
				!{(-1,0)}*+{\mathbf{1}}
				!{(1,0)}*+{\mathbf{1}}
				!{(2,0)}*+{\mathbf{1}}
				!{(-2,1)}*+{4}
				!{(-1,1)}*+{4}
				!{(0,1)}*+{4}
				!{(1,1)}*+{\mathbf{2}}
				!{(2,1)}*+{4}
				!{(-2,2)}*+{5}
				!{(-1,2)}*+{5}
				!{(0,2)}*+{5}
				!{(1,2)}*+{5}
				!{(2,2)}*+{\mathbf{3}}
		}}
		&
		\boxed{\xygraph{
				!{<0cm,0cm>;<0.4cm,0cm>:<0cm,0.4cm>::}
				!{(-2,-2)}*+{\mathit{2}}
				!{(-1,-2)}*+{\mathit{2}}
				!{(0,-2)}*+{\mathit{1}}
				!{(1,-2)}*+{\mathit{2}}
				!{(2,-2)}*+{\mathit{1}}
				!{(-2,-1)}*+{\mathit{2}}
				!{(-1,-1)}*+{\mathit{2}}
				!{(0,-1)}*+{\mathit{1}}
				!{(1,-1)}*+{\mathit{1}}
				!{(2,-1)}*+{\mathit{2}}
				!{(-2,0)}*+{\mathit{1}}
				!{(-1,0)}*+{\mathit{1}}
				!{(1,0)}*+{\mathit{1}}
				!{(2,0)}*+{\mathit{1}}
				!{(-2,1)}*+{\mathit{2}}
				!{(-1,1)}*+{\mathit{1}}
				!{(0,1)}*+{\mathit{1}}
				!{(1,1)}*+{\mathit{2}}
				!{(2,1)}*+{\mathit{2}}
				!{(-2,2)}*+{\mathit{1}}
				!{(-1,2)}*+{\mathit{2}}
				!{(0,2)}*+{\mathit{1}}
				!{(1,2)}*+{\mathit{2}}
				!{(2,2)}*+{\mathit{2}}
		}}
		&
		\boxed{\xygraph{
				!{<0cm,0cm>;<0.4cm,0cm>:<0cm,0.4cm>::}
				!{(-2,-2)}*+{4}
				!{(-1,-2)}*+{4}
				!{(0,-2)}*+{2}
				!{(1,-2)}*+{3}
				!{(2,-2)}*+{2}
				!{(-2,-1)}*+{4}
				!{(-1,-1)}*+{3}
				!{(0,-1)}*+{1}
				!{(1,-1)}*+{1}
				!{(2,-1)}*+{3}
				!{(-2,0)}*+{2}
				!{(-1,0)}*+{1}
				!{(1,0)}*+{1}
				!{(2,0)}*+{2}
				!{(-2,1)}*+{3}
				!{(-1,1)}*+{1}
				!{(0,1)}*+{1}
				!{(1,1)}*+{3}
				!{(2,1)}*+{4}
				!{(-2,2)}*+{2}
				!{(-1,2)}*+{3}
				!{(0,2)}*+{2}
				!{(1,2)}*+{4}
				!{(2,2)}*+{4}
		}} \\
		K_5\cong C_4\times C_2 & K_6\cong C_3\rtimes C_4 & K_7\cong D_{12}
	}$$
	\caption{\label{Orbits5} $K_i$-classes of the groups $K_i$. The rows and columns are displayed in the following order: $-2,-1,0,1,2$.}
\end{figure}

Figure~\ref{Orbits5} displays the $K_i$-classes of the action of each of these groups on $\F_5^2$.
We consider the images of an element $\varepsilon$ in $E(K_i)$ as variables $x_1,\dots,x_r$, where $r$ is the number of $K_i$-classes, and represent $E(K_i)$ in terms of properties of these variables given by conditions in Definition~\ref{DefinitionE} and Lemma~\ref{InequalitiesAbelianL}.
Actually, we will only calculate properties implied by Conditions \eqref{SumOne} and \eqref{Inequation} and Lemma~\ref{InequalitiesAbelianL}.
For example, Condition~\eqref{SumOne} corresponds to $\sum_{i=1}^r x_j=1$ and Lemma~\ref{InequalitiesAbelianL}.\eqref{AtLeastqL} means that if the $j$-th $K_i$-class contains less than $5$ entries then $x_j\ge 0$. These $K_i$-classes are displayed in boldface in Figure~\ref{Orbits5}.
In case the $j$-th $K_i$-class contains a line then Lemma~\ref{InequalitiesAbelianL}.\eqref{CocyclicInOrbitL} implies $x_j\ge 0$.
We display these $K_i$-classes in italics.
To prove that $E(K_i)$ is empty we prove that the set of solutions does not contain any negative entry.
For example, this is easy for $K_3, K_4$ and $K_6$ since each $K_i$-class of these groups either contains a line (italics) or has cardinality smaller then $5$ (boldface) and then, in these cases, each indeterminate $x_j$ is non-negative.
The conditions defining the remaining sets $E(K_i)$ are below.

\[
E(K_1) \left\{
\matriz{{rrrrrrrrrl}
	x_1 &+& x_2 &+& x_3 &+& x_4 &=& 1 \\
	x_1 &+& x_2 & & & & &\geq& 0  \\
	2x_1 & & &+& 2x_3 &+& x_4 &\geq& 0  \\
	& & 2x_2 &+& x_3 &+& 2x_4 &\geq& 0 \\
	& & & & x_3 &+& x_4 &\geq& 0   \\
	2x_1 &+& x_2 & & &+& 2x_4 &\geq& 0 \\
	x_1 &+& 2x_2 &+& 2x_3 & & &\geq& 0
}\right.\]

\[E(K_2) \left\{
\matriz{{rrrrrrrrrrrrrl}
	x_1 &+& x_2 &+& x_3 &+& x_4 &+& x_5 &+& x_6 &=& 1 \\
	x_1& & & & & & & &  & & &\geq& 0 \\
	& & x_2& & & & & & & & &\geq& 0 \\
	& & & & x_3 & & & & & & &\geq& 0 \\
	& & & & & & x_4 & & & & &\geq& 0 \\
	&&  & & & & & & x_5& + & x_6&\geq& 0 \\
	x_1 & &  & + & x_3 & & &+& 2x_5& + & x_6&\geq& 0 \\	
	& & x_2 & &  &+& x_4 &+& x_5 &+& 2x_6 &\ge & 0 \\
	2x_1 &+& x_2 &&  &&  &&  &+& 2x_6 &\ge & 0\\
	&&  && 2x_3 &+& x_4 &&  &+& 2x_6 &\ge & 0\\
	&+& 2x_2 &+& x_3 &&  &+& 2x_5 &&  &\ge & 0 \\
	x_1 &&  &&  &+& 2x_4 &+& 2x_5 &&  &\ge & 0
}\right.\]

\[E(K_5) \left\{
\matriz{{rrrrrrrrrrrl}
	x_1 &+& x_2 &+& x_3 &+& x_4 &+& x_5 &=1 \\
	x_1& &  & & & & & & &\geq& 0 \\
	& & x_2 & & & & & & &\geq& 0 \\
	& & & & x_3& & & & &\geq& 0 \\
	&& & &  & & x_4&+&x_5&\geq& 0 \\
	&& x_2 & & &+ &4x_4 & & &\geq& 0\\
	& & & & x_3 &&  & +& 4x_5&\geq& 0\\
}\right.\]

\[E(K_7) \left\{
\matriz{{rrrrrrrrrl}
	x_1 &+& x_2 &+& x_3 &+& x_4 &=& 1 \\
	x_1 &+& x_2 & & & & &\geq& 0 \\
	& & & & x_3 &+& x_4 &\geq& 0 \\
	2x_1 & & &+& 2x_3 &+& x_4 &\geq& 0\\
	& & 2x_2 &+& x_3 &+& 2x_4 &\geq& 0\\
	2x_1 &+& x_2 & & &+& 2x_4 &\geq& 0\\
	x_1 &+& 2x_2 &+& 2x_3 & & &\geq& 0
}\right.\]

The reader can easily verify that in all the cases the solutions are trivial, i.e. one of the variables is $1$ and the others are $0$.
\end{proof}

However, the bound $q\leq 5$ in Theorem~\ref{Positive2-3-5} is sharp regarding our methods as the following example shows.

\begin{example}\label{C7C7Example}
The output of Algorithm~\ref{AlgorithmLocalN} with input $C_7\times C_7$ is not \texttt{true}.
\end{example}

\begin{proof}
Let $A=C_7\times C_7$ and $S=\Aut(A)$. To describe a subgroup $K$ of $S$ for which $\mathcal{E}(S,K,A)\ne \emptyset$  we identify $A$ with the additive group of $\F_{49}$.
Then $K$ is the cyclic group generated by the automorphism $x\mapsto \beta x$ for $\beta$ an element of $\U(\F_{49})$ of order $16$.
The base of $\F_{49}$ over $\F_7$ used to display this vector space is $1$ and $\alpha$, where $\alpha$ is a root of $X^2 - X + 3$, a primitive element of $\F_{49}$. Thus $\beta=\alpha^3 = -2\alpha - 3$.
This allows us to mark the orbits of the action of $\alpha^3$ on $\mathbb{F}_7^2$ in Figure~\ref{C7C7Action} together with the inequalities of Condition \eqref{Inequation}.
Conditions \eqref{SumOne} and \eqref{LowerBound} take the form $x_1+x_2+x_3=1$ and  $x_1,x_2,x_3\ge -\frac{16}{7}$ respectively.
One readily verifies that
\[\mathcal{E}(S,K,A) = \{(2,-1,0),(0,2,-1),(-1,0,2)\} \]
and hence the output of Algorithm~\ref{AlgorithmLocalN} with input $C_7\times C_7$ is not \texttt{true}.
\begin{figure}[h!]
$$
\matriz{{cc}
\boxed{\xygraph{
!{<0cm,0cm>;<-2.5cm,0cm>:<0cm,-2.5cm>::}
!{(0,0)}*+{2}
!{(0.16,0)}*+{3}
!{(0.32,0)}*+{2}
!{(0.48,0)}*+{1}
!{(0.64,0)}*+{2}
!{(0.80,0)}*+{2}
!{(0.96,0)}*+{1}
!{(0,.16)}*+{1}
!{(0.16,.16)}*+{1}
!{(0.32,.16)}*+{1}
!{(0.48,.16)}*+{3}
!{(0.64,.16)}*+{2}
!{(0.80,.16)}*+{3}
!{(0.96,.16)}*+{1}
!{(0,.32)}*+{1}
!{(0.16,.32)}*+{3}
!{(0.32,.32)}*+{3}
!{(0.48,.32)}*+{2}
!{(0.64,.32)}*+{2}
!{(0.80,.32)}*+{3}
!{(0.96,.32)}*+{3}
!{(0,.48)}*+{3}
!{(0.16,.48)}*+{2}
!{(0.32,.48)}*+{1}
!{(0.64,.48)}*+{1}
!{(0.80,.48)}*+{2}
!{(0.96,.48)}*+{3}
!{(0.96,.96)}*+{2}
!{(0.80,.96)}*+{3}
!{(0.64,.96)}*+{2}
!{(0.48,.96)}*+{1}
!{(0.32,.96)}*+{2}
!{(0.16,.96)}*+{2}
!{(0,.96)}*+{1}
!{(0.96,.80)}*+{1}
!{(0.80,.80)}*+{1}
!{(0.64,.80)}*+{1}
!{(0.48,.80)}*+{3}
!{(0.32,.80)}*+{2}
!{(0.16,.80)}*+{3}
!{(0,.80)}*+{1}
!{(0.96,.64)}*+{1}
!{(0.80,.64)}*+{3}
!{(0.64,.64)}*+{3}
!{(0.48,.64)}*+{2}
!{(0.32,.64)}*+{2}
!{(0.16,.64)}*+{3}
!{(0,.64)}*+{3}
}}
&
\matriz{{rrrrrrrrrr}
\\\\
\rightarrow &2x_1 &+& 4x_2 &+& x_3 & \ge & 0 \\
\rightarrow &4x_1 &+& x_2 &+& 2x_3 & \ge & 0 \\
\rightarrow &x_1 &+& 2x_2 &+& 4x_3 & \ge & 0
}
}
$$
\caption{Inequalities provided by cyclic action of order 16 on $C_7 \times C_7$.}\label{C7C7Action}	
\end{figure}
\end{proof}

\section{One example where HeLP fails and our method works}\label{HeLPFails}

The goal of this section is to show that our algorithms provide stronger results than the standard HeLP Method giving an explicit example.
The HeLP Method provides restrictions on the partial augmentations of torsion units which can be used in some cases to prove that a torsion element $u$ of $\V(\Z G)$ is rationally conjugate to an element of $G$, by showing that the restrictions imply that the partial augmentations of all the powers of $u$ are non-negative. This method was introduced in \cite{LutharPassi1989} and \cite{HertweckBrauer}.

\begin{remark}\label{HeLPRemark}
In case $N$ is a nilpotent normal subgroup in $G$ and $u$ is a torsion element in $\V(\Z G,N)$ we only need to prove that the partial augmentations of $u$ are non-negative and the calculations of the HeLP Method can be simplified as explained in \cite{MargolisdelRioPAP}. Moreover by \cite[Remark~3.4]{MargolisdelRioPAP} the restrictions provided in this case by the HeLP Method are exactly those provided by the inequalities in \cite[Proposition~3.3]{MargolisdelRioCW1}. By \cite[Remark 3.6]{MargolisdelRioCW1} these restrictions follow from the restrictions in our algorithms. More precisely they follow from the restrictions in \cite[Theorem~3.4]{MargolisdelRioCW1}, which are just Conditions \eqref{SumOne}, \eqref{VanishOutSideLocalClass} and \eqref{Inequation}. So the restrictions provided by our algorithms are at least as strong as those provided by the HeLP Method and it will follow from Example~\ref{StrongerThanHeLP} that they are actually strictly stronger.
\end{remark}

We start recalling some notation and introducing a family of groups which will be encountered later.

If $\chi$ is the character of $G$ afforded by a complex representation $\rho$, $u$ is an element of $\C G$ and $\xi$ is a complex number then let $\mu(\chi,u,\xi)$ denote the multiplicity of $\xi$ as eigenvalue of $\rho(u)$.

\begin{notation}\label{Gpqd}
Fix two different prime integers $p$ and $q$ and let $\alpha$ and $\beta$ be a primitive elements of $\F_{p^2}$ and $\F_{q^2}$ respectively.
Fix a common divisor $d$ of $p^2-1$ and $q^2-1$.
Let $N$ be the underlying additive group of $\F_{p^2}\times \F_{q^2}$ and let $H=\GEN{a,b,c}$ where $a,b$ and $c$ are the automorphisms of $N$ given by
	$$(x,y)^a = (\alpha^d x,y), \quad (x,y)^b = (x,\beta^d y), \quad (x,y)^c = (\alpha x, \beta y),$$
for $x\in \F_{p^2}$ and $y\in \F_{q^2}$.
Then $H$ is the abelian group given by the following presentation (as abelian group)
	 $$H=\GEN{a,b,c| a^{\frac{p^2-1}{d}}=b^{\frac{q^2-1}{d}}=1, c^d=ab}$$
Let $G_d(\alpha,\beta)$ denote the semidirect product $N\rtimes H$.
We call these kind of groups groups of type $G_d(p,q)$.
\end{notation}

Let $G=G_d(\alpha,\beta)$, a group of type $G_d(p,q)$, and let $u$ be a torsion elements of $\V(\Z G,N)$.
If the order of $u$ is different from $pq$ then $u$ is rationally conjugate to an element of $N$ by Proposition~\ref{PAFacts}.\eqref{HertweckPAdic}.
Suppose that $|u|=pq$. Then $\ell_{\Q G}(u)$ is formed by all the elements of $N$ of order $pq$ and the elements of the form $m_i=(1,\beta^i)$ with $i=0,1,\dots,d-1$ form a set of representatives of the $G$-classes contained in $\ell_{\Q G}(u)$.
Hence $\pa{u}{m_0},\dots,\pa{u}{m_{d-1}}$ are the only possible non-zero partial augmentation of $u$.
The HeLP Method provides restrictions on these numbers.

Fix a complex primitive $p$-th root of unity $\zeta_p$ and a complex primitive $q$-th root of unity $\zeta_q$.
Consider the linear character $\psi_0$ of $N$ with kernel the group generated by $(1,1)$ and mapping $(\alpha,\beta)$ to $\zeta_p\zeta_q$.
Set $\chi_0=\ind_N^G(\psi_0)$, the character of $G$ induced by $\psi_0$.

\begin{lemma}\label{HeLPGpqd}
Let $G$ be a group of type $G_d(p,q)$ such that $d$ is coprime with $\gcd(p+1,q+1)$. A list of integers $(x_0,x_1,\dots,x_{d-1})$ satisfies the restrictions on $\pa{u}{m_0},\dots,\pa{u}{m_{d-1}}$ provided by the HeLP Method for an element $u$ of order $pq$ in $\V(\Z G,N)$ if and only if
    \begin{equation}\label{HeLPInequality}
    x_0+\dots+x_{d-1}=1 \qand \sum_{i=0}^{d-1} \mu(\chi_0,m_i,\xi)\; x_i\ge 0
    \end{equation}
for every $pq$-th root of unity $\xi$.
\end{lemma}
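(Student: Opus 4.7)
The plan is to translate each HeLP constraint $\mu(\chi,u,\xi)\ge 0$ into a linear constraint on $(x_0,\dots,x_{d-1})$ and to show that, under the coprimality hypothesis on $d$, the whole family collapses to the two conditions in \eqref{HeLPInequality}.

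The central technical step will be to establish
\[
\mu(\chi,u,\xi)=\sum_{i=0}^{d-1}x_i\,\mu(\chi,m_i,\xi)
\]
for every character $\chi$ of $G$ and every $pq$-th root of unity $\xi$. It will suffice to check $\chi(u^k)=\sum_i x_i\chi(m_i^k)$ for each $k\in\Z/pq$. For $k$ coprime to $pq$ I will use invertibility of the character table of $G$: this forces $\pa{u^k}{g^k}=\pa{u}{g}$ on every class of elements of order dividing $pq$, and applied to the $m_i$ --- noting that $m_i^k$ lies in the $G$-class of $m_{i+c(k)}$ for a shift $c(k)\in\Z/d$ read off from the discrete logarithms of $k$ in $\F_{p^2}^*$ and $\F_{q^2}^*$ --- it yields $\pa{u^k}{m_j}=\pa{u}{m_{j-c(k)}}$ and the identity drops out. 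For $p\mid k$ or $q\mid k$ I will invoke Proposition~\ref{PAFacts}.\eqref{HertweckPAdic}: $u^p$ is rationally conjugate to a unique $q$-element $n_q\in N$ (because $H$ acts transitively on $N_q\setminus\{0\}$) and $u^q$ to a fixed $n_p$, so $\chi(u^k)$ takes a single value depending only on $k$; since the $p$-part (resp.\ $q$-part) of $m_i^k$ is independent of $i$, the equality $\sum_i x_i=1$ closes the argument. The forward implication of the lemma is then the specialization $\chi=\chi_0$.

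For the converse I will take an arbitrary irreducible character $\chi$ of $G$ and classify it by the $H$-orbit in $\widehat N$ containing the constituents of $\chi|_N$. Since $H$ acts transitively on $\widehat{N_p}\setminus\{1\}$ and on $\widehat{N_q}\setminus\{1\}$, the possible types are: the trivial orbit, a unique ``$p$-type'' orbit (characters trivial on $N_q$), a unique ``$q$-type'' orbit, and $d$ orbits of characters non-trivial on both Sylow subgroups. For the first three types $\chi(m_i^k)$ is independent of $i$, so $\mu(\chi,m_i,\xi)=\mu(\chi,g,\xi)\ge 0$ for a suitable group element $g\in\{1,n_p,n_q\}$, and, given $\sum_i x_i=1$, the HeLP constraint is automatic. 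The $d$ irreducible characters associated with the remaining orbits I will show are all Galois conjugates of $\chi_0$ under $\Gal(\Q(\zeta_{pq})/\Q)\cong\F_p^*\times\F_q^*$: this action shifts the orbit index in $\Z/d$ by elements of the subgroup generated by the residues of $p+1$ and $q+1$, and the hypothesis $\gcd(d,\gcd(p+1,q+1))=1$ is exactly what is needed to make that subgroup all of $\Z/d$. Since $\mu(\chi_0^\sigma,u,\xi)=\mu(\chi_0,u,\sigma^{-1}(\xi))$, the inequalities from these Galois conjugates coincide with those in \eqref{HeLPInequality} after relabeling $\xi$.

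The main obstacle will be the central identity above, specifically pinning down the shift $c(k)$ and uniformly handling the coprime and non-coprime cases. A secondary delicate point is the Galois-transitivity on the $d$ ``$pq$-type'' orbits, where the coprimality hypothesis enters essentially: without it, these characters split into several Galois orbits and additional constraints on $(x_0,\dots,x_{d-1})$ beyond \eqref{HeLPInequality} would arise.
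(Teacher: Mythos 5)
Your overall strategy is correct and, at its core, coincides with the paper's: dispose of the irreducible characters that are trivial on $N_p$ or on $N_q$ (for these $\mu(\chi,m_i,\xi)$ is independent of $i$, so the constraint is automatic once $\sum_i x_i=1$), and show that the remaining ones are all Galois conjugate to $\chi_0$, so that they impose no new inequalities. The only real difference is where the coprimality hypothesis is spent: the paper proves via the Chinese Remainder Theorem that $H$ acts transitively on the cyclic subgroups of order $pq$ in $N$ and then cites a general fact to get Galois conjugacy of the induced characters, whereas you compute the Galois action directly on the $d$ orbit indices in $\Z/d$ and observe that its image is the subgroup generated by $\gcd(d,p+1)$ and $\gcd(d,q+1)$, which is all of $\Z/d$ exactly under the hypothesis. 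Both arguments are correct and essentially equivalent; yours is slightly more explicit, the paper's slightly shorter.

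One caveat. Your ``central technical step'', the identity $\mu(\chi,u,\xi)=\sum_i x_i\,\mu(\chi,m_i,\xi)$, is not something the lemma asks you to prove: the statement is about ``the restrictions provided by the HeLP Method'', and the paper takes their precise form from \cite[Proposition~3.3]{MargolisdelRioPAP}, which already says they are $\sum_i x_i=1$ together with $\sum_i\mu(\chi,m_i,\xi)x_i\ge 0$ for all $\chi$ and $\xi$. If you do insist on re-deriving this, the step ``invertibility of the character table forces $\pa{u^k}{g^k}=\pa{u}{g}$'' is not a valid justification as written: the relation between the partial augmentations of $u$ and of its coprime powers is a genuinely nontrivial input (in general HeLP one must treat the partial augmentations of the powers $u^k$ as independent unknowns), and in this setting it rests on the results for units of $\V(\Z G,N)$ collected in \cite{MargolisdelRioPAP}, not on linear independence of characters. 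Either cite that result, as the paper does, or drop the re-derivation entirely; as a self-contained argument the sketch has a hole at exactly that point.
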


\begin{proof}
Suppose that $G=G_d(\alpha,\beta)$.
We first prove that the condition on $d$ implies that $H$ acts transitively on the cyclic subgroups of order $pq$ in $N$. Indeed let $(\alpha^i, \beta^j)$ be an element in $N$.
By the assumption $\gcd(\gcd(d,p+1),\gcd(d,q+1))=1$ and hence, by the Chinese Remainder Theorem there is an integer $r$ such that $r\equiv -i \bmod \gcd(d,p+1)$ and $r\equiv -j \bmod \gcd(d,q+1)$.
Then the two congruence equations $dX \equiv -i-r\bmod (p+1)$ and $dX\equiv -j-r \mod (q+1)$ have solutions $s$ and $t$, respectively.
Thus $p+1 \mid i+r+ds$ and $q+1 \mid j+r+dt$ and therefore
$$(\alpha^i,\beta^j)^{c^ra^sb^t} = (\alpha^{i+r+ds}, \beta^{j+r+dt}) \in \mathbb{F}_p\times \mathbb{F}_q
 .$$
This proves that $c^ra^sb^t$ maps the cyclic group generated by $(\alpha^i, \beta^j)$ to the cyclic group $\F_p\times \F_q$ in $N$. Hence the claim follows.

By \cite[Proposition~3.3]{MargolisdelRioPAP} the restrictions for the HeLP Method are $x_0+\dots+x_{d-1}=1$ and for every character $\chi$ and every $pq$-th root of unity $\xi$
	$$\sum_{i=0}^{d-1} \mu(\chi,m_i,\xi) \; x_i\ge 0.$$
First of all we only need to consider irreducible characters of $G$ and all of them are of the form $\chi=\ind_U^G(\psi)$ where $U$ is a subgroup of $G$ containing $N$ and $\psi$ is a linear character of $U$ \cite[Corollary 3.5.13]{GRG1}.
If the kernel of $\psi$ contains $N_p$ or $N_q$ then $\chi$ takes the same values on all $G$-classes in $\ell_{\Q G}(u)$, because the $p$-parts of the elements of $\ell_{\Q G}(u)$ are conjugate in $G$, and the same holds for its $q$-parts.
Thus the intersection of the kernel of $\psi$ and $N$ is a cyclic group of order $pq$. Since the centralizer of any cyclic group of order $pq$ in $G$ is exactly $N$ and $\psi$ is a linear character this implies that $U = N$.
Since the action of $G$ on the cyclic groups of order $pq$ in $N$ is transitive all the characters we obtain by this construction are Galois-conjugate by \cite[Problem 3.4.4]{GRG1}.
Since Galois-conjugate characters provide the same restrictions for the HeLP Method it thus is sufficient to the study the restrictions provided by the HeLP Method only for one such $\chi$, for example $\chi_0$.
\end{proof}

By Theorem~\ref{Positive2-3-5}, Sehgal's Problem has a positive solution for group of type $G_d(p,q)$ if $p$ or $q$ is at most $5$. However, this cannot be shown using only the HeLP Method as the following example shows:

\begin{example}\label{StrongerThanHeLP}
The HeLP Method fails to give a positive answer to Sehgal's Problem for groups of type $G_3(5,7)$.

Hence, by Theorem~\ref{Positive2-3-5} and Remark~\ref{HeLPRemark}, the restrictions provided on the torsion units in $\V(\Z G, N)$ by Algorithm~\ref{AlgorithmLocalGN} are strictly stronger than the restrictions provided by the HeLP Method.
\end{example}
\begin{proof}
Let $G=G_3(\alpha,\beta)$ for $\alpha$ and $\beta$ primitive elements of $\F_{25}$ and $\F_{49}$ respectively.
By Lemma~\ref{HeLPGpqd} it is enough to show that  $(x_0,x_1,x_2)=(2,0,-1)$ satisfies \eqref{HeLPInequality} for every $35$-th root of unity. To do that observe that if $r_0,r_1\in \F_p$ and $s_0,s_1\in \F_q$ then $\psi_0(r_0+r_1\alpha,s_0+s_1\beta)=\zeta_p^{r_1}\zeta_q^{s_1}$. Using this it is easy to prove that
\begin{eqnarray*}
\mu(\chi_0,m_i,\zeta_p^r\zeta_q^s) &= |\{(x,y): & 0\le x < p^2 - 1, 0\le y < q^2 - 1, \\
& & \alpha^x-r\alpha\in \F_p, \beta^y-s\beta\in \F_q, x\equiv y + i \bmod d \}|.
\end{eqnarray*}
After a computer aid calculation we get that for any $35$-th root of unity $\xi$ the triple $(\mu(\chi_0,m_0,\xi), \mu(\chi_0,m_1,\xi), \mu(\chi_0,m_2,\xi))$ is one of the following:
$( 4, 16, 8 )$, $( 8, 4, 16 )$, $( 8, 8, 8 )$, $( 10, 10, 10 )$, $( 10, 12, 13 )$, $( 12, 13, 10 )$, $( 13, 10, 12 )$, $( 16, 8, 4 )$.
All these tuples satisfy \eqref{HeLPInequality}.
\end{proof}

\section{Constructing metabelian counterexamples}\label{SecConstruction}

The aim of this section is analyzing how a negative solution to Sehgal's Problem, with $N$ and $G/N$ abelian, should be, with $N$ minimal in some sense.
By \cite[Corollary~5.3]{MargolisdelRioCW1}, $N$ should have at least two non-cyclic Sylow subgroups, so the minimal case is $N\cong C_{pq}\times C_{pq}$ with $p$ and $q$ different primes greater or equal than $7$, by Theorem~\ref{Positive2-3-5}.

The following observation will be useful for our constructions.

\begin{lemma}\label{Np'CqCq}
Assume $A \cong C_p \times C_p$ for some prime $p$ and that $G$ is an abelian group acting on $A$.
If $f(a)<0$ for some $f \in \mathcal{E}(G, H, A)$ with $H$ a subgroup of $G$ then $\Cen_G(a) = \Cen_G(A)$.
\end{lemma}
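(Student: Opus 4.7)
The plan is to argue by contradiction. Since $\Cen_G(A)\subseteq\Cen_G(a)$ always, we need only the reverse inclusion, so assume there exists $g\in\Cen_G(a)\setminus\Cen_G(A)$. Its image $\bar g$ in $\bar G:=\Inn_G(A)\le\Aut(A)\cong\GL(2,p)$ is non-trivial and fixes $a$. I would then examine the fixed-point subgroup $F:=\{x\in A : x^{\bar g}=x\}$. Since $\bar g\ne 1$, $F$ is a proper subgroup of $A$; since $F$ contains $a$, which is non-trivial by Remark~\ref{RemarksOperatorE}(\ref{ACyclicEEmpty}), $F$ is non-trivial. As $A\cong C_p\times C_p$, this forces $F$ to have order $p$, and since $\langle a\rangle$ is the unique order-$p$ subgroup of $A$ containing $a$, we get $F=\langle a\rangle$.

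The next step uses that $G$, and hence $\bar G$, is abelian. For every $\bar h\in\bar G$ the relation $a^{\bar h\bar g}=a^{\bar g\bar h}$ combined with $a^{\bar g}=a$ gives $(a^{\bar h})^{\bar g}=a^{\bar h}$, i.e.\ $a^{\bar h}\in F=\langle a\rangle$. Hence $a^G\subseteq\langle a\rangle$. Since $A$ is a $p$-group we have $\ell_G(a)=a^G$, so Condition~\eqref{VanishOutSideLocalClass} in Definition~\ref{DefinitionE} implies that $f$ vanishes outside $\langle a\rangle$.

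To close, I would pick any subgroup $K$ of $A$ of order $p$ with $K\ne\langle a\rangle$; such a $K$ exists because $A\cong C_p\times C_p$ has $p+1$ subgroups of order $p$. Then $K$ is a minimal cocyclic subgroup and $aK$ is a minimal cocyclic coset. As $K\cap\langle a\rangle=\{1\}$, the intersection $aK\cap\langle a\rangle$ equals $\{a\}$. Applying Condition~\eqref{Inequation} to $C=aK$ yields
\[
0\le \sum_{c\in aK}|\Cen_H(c)|\,f(c)=|\Cen_H(a)|\,f(a),
\]
because every summand with $c\ne a$ vanishes (such $c$ lie outside $\langle a\rangle$, so $f(c)=0$). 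This forces $f(a)\ge 0$, contradicting $f(a)<0$. The only delicate step is pinning down $F=\langle a\rangle$, which requires both $\bar g\ne 1$ (to get $F\ne A$) and $a\ne 1$ (to get $F\ne\{1\}$), and crucially the rank-$2$ hypothesis on $A$; the rest of the argument is a direct application of the defining conditions of $\mathcal{E}(G,H,A)$ together with the commutativity of $G$.
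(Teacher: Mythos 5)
Your proof is correct, but it takes a genuinely different route from the paper's. The paper argues directly: it invokes Lemma~\ref{InequalitiesAbelianL}.\eqref{AtLeastqL} (which rests on the lower bound of Condition~\eqref{LowerBound} and the value of $h_A$) to get $[G:\Cen_G(a)]\ge p$, hence a conjugate $b=a^g\notin\GEN{a}$ with $A=\GEN{a,b}$; commutativity of $G$ then gives $\Cen_G(a)\le\Cen_G(b)$, so $\Cen_G(a)=\Cen_G(a)\cap\Cen_G(b)=\Cen_G(A)$. You instead argue by contradiction: commutativity forces the whole orbit $a^G$ into the fixed subgroup $\GEN{a}$ of any element of $\Cen_G(a)\setminus\Cen_G(A)$, and then the cocyclic-coset inequality of Condition~\eqref{Inequation} applied to a transversal coset $aK$ forces $f(a)\ge 0$. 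In effect the two proofs establish opposite sides of the same dichotomy: the paper shows $a^G\not\subseteq\GEN{a}$ using Condition~\eqref{LowerBound}, while you show that $a^G\subseteq\GEN{a}$ is incompatible with Conditions~\eqref{VanishOutSideLocalClass} and \eqref{Inequation} (your closing step is essentially Lemma~\ref{InequalitiesAbelianL}.\eqref{CocyclicInOrbitL} specialized to $C=aK$). Your version has the minor advantage of avoiding the $h_A$ computation entirely; the paper's is slightly shorter because the needed orbit-size bound is already packaged in Lemma~\ref{InequalitiesAbelianL}. All the delicate points you flag (nontriviality of $a$ via Remark~\ref{RemarksOperatorE}.\eqref{ACyclicEEmpty}, the rank-$2$ hypothesis pinning down $F=\GEN{a}$, minimality of the cocyclic subgroup $K$) check out.
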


\begin{proof}
By Lemma~\ref{InequalitiesAbelianL}.\eqref{CyclicComplementL} $[G:\Cen_G(a)]>p$.
This implies that  $b = a^g \not\in \GEN{a}$ for some $g \in G$.
Therefore $A=\GEN{a,b}$.
Let $b\in \Cen_G(a)$. As $G$ is abelian we have $b^b = a^{gb} = a^{bg} = a^g = b$. This proves that $\Cen_G(a)\leq \Cen_G(b)$ and therefore
\[\Cen_G(A)=\Cen_G(\langle a,b \rangle)=\Cen_G(a)\cap \Cen_G(b)=\Cen_G(a),\] as desired.
\end{proof}

\subsection{General construction}\label{SubSecConstrucion}

We now explain how one can construct a group $G$ containing a normal nilpotent subgroup $N$ isomorphic to $C_{pq}\times C_{pq}$ such that $G/N$ is abelian for which our strategy fails to give a positive answer to Sehgal's Problem, in other words, so that the output of Algorithm~\ref{AlgorithmGlobalGN} with input $(G,N)$ is not \texttt{true}.
The idea is to combine the outputs $O_p$ and $O_q$ of Algorithm~\ref{AlgorithmLocalN} with input $N_p$ and $N_q$ respectively, in the following form:
First of all neither $O_p$ nor $O_q$ should be \texttt{true}.

Then we look for all the pairs $(K_p,f_p)$ and $(K_q,f_q)$ with $f_p\in E_p$ for some $(K_p,E_p) \in O_p$ with $K_p$ abelian and $f_q\in E_q$ for some $(K_q,E_q)\in O_q$ with $K_q$ abelian satisfying the following conditions:
\begin{enumerate}
\item There is $n\in N$ with $f_p(n_p)=f_q(n_q)\ne 0$, which will be fixed throughout.
\item\label{ApAq} There are abelian subgroups $T_p$ of $\Aut(N_p)$ and $T_q$ of $\Aut(N_q)$ with $K_p\le T_p$ and $K_q\le T_q$ such that:
    \begin{enumerate}
    \item\label{ApAqClass} if $f_p(x)\ne 0$ then $x\in n_p^{T_p}$ and if $f_q(x)\ne 0$ then $x\in n_q^{T_q}$;
    \item there is an isomorphism $\alpha:T_p/K_p \cong T_q/K_q$ such that for every $a\in T_p$ we have
        $$f_p(n_p^{aK_p})=f_q(n_q^{\alpha(a^{-1}K_p)}).$$
    \end{enumerate}
\end{enumerate}

Consider $T_p\times T_q$ as a subgroup of $\Aut(N)$ in the natural way and consider the natural maps $r_p:T_p\times T_q\rightarrow T_p$ (i.e. $r_p$ acts as restriction to $N_p$) and $\pi_p:T_p\rightarrow T_p/K_p$.
Define $r_q$ and $\pi_q$ similarly.
Then let
    $$\Gamma=\{\sigma \in T_p\times T_q : \alpha \pi_p r_p(\sigma)=\pi_q r_q(\sigma)\}$$
(i.e. $\Gamma$ is the pullback of $\alpha \pi_pr_p$ and $\pi_qr_q$) and consider the group $G=N\rtimes \Gamma$.
Observe that the maps $r_p:\Gamma\rightarrow T_p$ and $r_q:\Gamma\rightarrow T_q$ are surjective.
This implies that $x^G=x^{T_p}$ for every $x\in N_p$ and similarly $x^G=x^{T_q}$ for every $x\in N_q$.

We define $\varepsilon:N\rightarrow \Z$ as follows:
    \begin{equation}\label{Epsilonfpfq}
    \varepsilon(x) = \begin{cases}
    0, & \text{if } x_q\not\in n_q^G; \\ f_p(x_p^{g}), & \text{if } n_q=x_q^g, \text{ with }g\in G.
    \end{cases}
    \end{equation}
We now prove that $\varepsilon$ belongs to the output of Algorithm~\ref{AlgorithmGlobalGN} with input $G$ and $N$.
We have to show that $\varepsilon$ belongs to the intersection of outputs of Algorithm~\ref{AlgorithmLocalGN} with input $G$, $N$ and $p$ and with input $G$, $N$ and $q$.
By the definition of $\varepsilon$, to prove that $\varepsilon$ belongs to the first output it is enough to show that $f_p\in \mathcal{E}(G,\Cen_G(n_q),N_p)$.
That $f_q\in \mathcal{E}(G,\Cen_G(n_p),N_q)$ is proved in the same way after showing that the equality in \eqref{Epsilonfpfq} holds with the roles of $p$ and $q$ interchanged.

To prove that \eqref{Epsilonfpfq} holds also when interchanging $p$ and $q$ observe that by the definition of $\varepsilon$ and since $f_p$ is an $K_p$-class function we have $\varepsilon(x) = 0$, if $x_p \notin n_p^G$ or $x_q \notin n_q^G$. Otherwise, $x = n_p^{g_p} n_q^{g_q}$ for some $g_p,g_q\in \Gamma$ and we get
\begin{eqnarray*}
\varepsilon(x) &=& f_p\left(n_p^{g_p g_q^{-1}}\right) =
f_p\left(n_p^{\pi_p r_p(g_p g_q^{-1})}\right) =
f_q\left(n_q^{\alpha \pi_p r_p(g_qg_p^{-1})}\right) =
f_q\left(n_q^{g_q g_p^{-1}}\right) \\
&=& f_q\left(x_q^{g_p^{-1}}\right).
\end{eqnarray*}

Next to prove that $f_p \in \mathcal{E}(G, \Cen_G(n_q), N_p)$ observe that, by the pullback properties, we have
	$$\pi_p(\Cen_{\Gamma}(N_q))=\pi_p(\ker r_q) = \ker \pi_p = K_p.$$
Moreover, by Lemma~\ref{Np'CqCq}, we know $\Cen_G(N_q) = \Cen_G(n_q)$ and so $\Inn_{\Cen_G(n_q)}(N_p) = K_p$. Since furthermore all elements $x \in N_p$ satisfying $f_p(x) \neq 0$ lie in a $G$-conjugacy class  it is easy to check that $f_p \in \mathcal{E}(\Aut(N_p), K_p, N_p)$ implies $f_p \in \mathcal{E}(G, \Cen_G(n_q), N_p)$.

\begin{remark}
The construction above realizes all the possible partial augmentations of negative solutions to Sehgal's Problem for $G$ and $N$, in case $N\cong C_{pq}\times C_{pq}$ and $G/N$ abelian.
Indeed, if $u$ is a torsion unit of $\V(\Z G,N)$ which gives a negative solution then the map $\varepsilon$ defined on $N$ by $\varepsilon(n)=\pa{u}{n}$ belongs to the output of Algorithm~\ref{AlgorithmGlobalGN} with input $G$ and $N$.
This implies that there is $n\in N$, a $\Cen_G(n_q)$-class function $f_p$ on $N_p$ and a $\Cen_G(n_p)$-class function $f_q$ on $N_q$ such that \eqref{Epsilonfpfq} holds. Moreover, it also holds with the roles of $p$ and $q$ interchanged.
By Lemma~\ref{Np'CqCq} we have $\Cen_G(n_q)=\Cen_G(N_q)$ and $\Cen_G(n_p)=\Cen_G(N_p)$. Let $K_p=\Inn_{\Cen_G(N_q)}(N_p)$, $K_q=\Inn_{\Cen_G(N_p)}(N_q)$, $T_p=\Inn_{G}(N_p)$ and $T_q=\Inn_{G}(N_q)$.
Then $f_p\in \mathcal{E}(\Aut(N_p),K_p,N_p)$ and $f_q\in \mathcal{E}(\Aut(N_q),K_q,N_q)$.
Now it is easy to see that $K_p,f_p,T_p$ and $K_q,f_q,T_q$ satisfy all the conditions needed for the general construction above and clearly the map defined by \eqref{Epsilonfpfq} is the original map $\varepsilon$.
\end{remark}

\subsection{Concrete counterexamples}

To construct a concrete abelian group of automorphisms of $C_{pq}\times C_{pq}$ suitable for the construction from the previous subsection, we aim to better understand the possible abelian actions on elementary abelian $p$-groups of rank $2$ satisfying the conditions of Lemma~\ref{InequalitiesAbelianL}.
These are classified in the following Lemma.

\begin{lemma}\label{Dichotomy}
Let $A = C_p \times C_p$ for some prime $p$ and $T \leq \Aut(A)$ be abelian such that $T$ contains a subgroup $K$ of order bigger than $p$ and not divisible by $p$. Then exactly one of the following conditions holds:
\begin{enumerate}
	\item $T$ is cyclic of order dividing $p^2-1$ acting semiregularly on $A\setminus \{1\}$, i.e. no element in $T\setminus \{1\}$ fixes any $n \in A \setminus \{1\}$.
In this case $T$ is determined uniquely by its order up to conjugacy in $\Aut(A)$.
	\item $A$ contains two cyclic non-trivial subgroups invariant under the action of $T$.
Then $T$ is isomorphic to a non-cyclic subgroup of $C_{p-1}^2$.
\end{enumerate}
\end{lemma}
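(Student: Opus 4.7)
The plan is to identify $A$ with $\F_p^2$ and $\Aut(A)$ with $\GL(2,p)$, then distinguish cases according to whether $T$ acts irreducibly on $A$. The hypothesis on $K$ will first rule out $p$-torsion in $T$, putting $T$ inside a maximal torus; Maschke and Schur will then separate the cases.

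First I would show $|T|$ is coprime to $p$. A Sylow $p$-subgroup of $\GL(2,p)$ is cyclic of order $p$, generated by a transvection, and its centralizer in $\GL(2,p)$ is $\left\{\begin{pmatrix} a & b \\ 0 & a \end{pmatrix} : a \in \F_p^\times,\ b \in \F_p\right\} \cong C_{p-1}\times C_p$, whose $p'$-part has order $p-1$. If $T$ had a nontrivial $p$-element, then since $T$ is abelian, $T$ would lie in such a centralizer, and the $p'$-subgroup $K$ would satisfy $|K|\le p-1<p$, contradicting the hypothesis.

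With $|T|$ coprime to $p$, Maschke's theorem gives a dichotomy: either $T$ acts irreducibly on $A$, or $A=L_1\oplus L_2$ decomposes as a sum of two $T$-invariant lines. In the irreducible case, Schur's lemma says $\End_T(A)$ is a division algebra over $\F_p$, and the commutative subalgebra $\F_p[T]\subseteq \End_T(A)$ is therefore a finite field extension of $\F_p$; acting irreducibly on a two-dimensional space forces $\F_p[T]\cong \F_{p^2}$, so $A$ becomes a one-dimensional $\F_{p^2}$-module on which $T$ acts by multiplication. Hence $T\hookrightarrow \F_{p^2}^\times$ is cyclic of order dividing $p^2-1$, and any nonidentity element acts as multiplication by a scalar $\neq 1$, leaving no nonzero vector fixed, which gives semiregularity. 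In the reducible case, $T$ embeds into $\Aut(L_1)\times\Aut(L_2)\cong C_{p-1}^2$, and since $|T|\ge |K|>p>p-1$, $T$ cannot be cyclic; this also makes the two cases mutually exclusive.

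The main obstacle I expect is the uniqueness up to conjugacy in case (1). For this I would use rational canonical form: a generator of a cyclic irreducibly acting subgroup of order $n$ has an irreducible characteristic polynomial of degree $2$ over $\F_p$ whose roots form a $\Gal(\F_{p^2}/\F_p)$-orbit of primitive $n$-th roots of unity, and conjugacy in $\GL(2,p)$ of such matrices is detected by this polynomial alone. Since the $\phi(n)$ generators of such a subgroup have as eigenvalues precisely the full set of primitive $n$-th roots of unity, every Galois orbit of primitive $n$-th roots is realized by some generator; matching generators of two such subgroups that share a common Galois orbit produces conjugate matrices, and hence conjugate subgroups.
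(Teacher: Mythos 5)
Your proof is correct, and it takes a genuinely different route from the paper's. The paper first checks that conditions (1) and (2) are mutually exclusive (the diagonal torus has exponent $p-1<|K|$), then assumes (2) fails and argues that $T$ must act semiregularly: if some non-trivial $g\in T$ has eigenvalue $1$, either $g$ is diagonalizable with distinct eigenvalues, forcing the abelian $T$ into the diagonal torus (contradicting the failure of (2)), or $g$ is a transvection and $T$ lies in its centralizer of order $p(p-1)$, forcing $|K|\le p-1$. The semiregular case is then settled by citing Huppert (II, 7.3 Satz), which yields both the Singer-cycle structure and the uniqueness up to conjugacy. You instead organize the dichotomy around irreducibility: after using the same transvection-centralizer bound to show $p\nmid|T|$, Maschke gives either irreducibility or two invariant lines, and in the irreducible case Schur's lemma identifies $\F_p[T]$ with $\F_{p^2}$ acting by multiplication, from which semiregularity and cyclicity are immediate; you then prove the conjugacy uniqueness yourself via rational canonical form and the Galois orbits of primitive $n$-th roots of unity, rather than quoting Huppert. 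The paper's argument is shorter because it leans on a stock reference for semiregular abelian linear groups; yours is self-contained and makes the representation-theoretic mechanism (Maschke/Schur/Wedderburn) explicit, at the cost of a somewhat longer uniqueness argument. Both proofs share the essential computation bounding the $p'$-part of the centralizer of a transvection, which is where the hypothesis $|K|>p$ enters.
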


\begin{proof}
We identify $A$ with the underlying additive group of $\F_p^2$.
In this way we consider $T$ as a subgroup of $\GL(2,p)$.
This can be used to prove that conditions (1) and (2) are not compatible, for if condition (2) holds after an appropriate choice of basis, one may assume that $T$ is contained in the group formed by the diagonal matrices in $\GL(2,p)$. Clearly these matrices do not contain a cyclic group of order bigger than $p$.

Suppose that condition (2) does not hold. This means precisely that there is no element $a \in \GL(2,p)$ such that $a^{-1}Ta$ is formed by diagonal matrices.
Suppose that the action of $T$ is not semiregular.
Then $1$ is an eigenvalue of some non-trivial element $g$ of $T$.
If $g$ is diagonalizable then there is $a\in \GL(2,p)$ such that $a^{-1}ga=\diag(1,x)$ for some $x\in \F_p\setminus \{0,1\}$. As $T$ is abelian $a^{-1}Ta$ is formed by diagonal matrices, contradicting the assumption.
Thus there is $a\in \GL(2,p)$ such that $a^{-1}ga=\pmatriz{{cc} 1&1\\0&1}$.
Again, using that $T$ is abelian we deduce that $a^{-1}Ta$ is contained in the maximal abelian subgroup $M$ formed by the upper triangular matrices of $\GL(2,p)$.
Thus $g^{-1}Kg$ is a subgroup of $M$ of order coprime with $p$. As $|M| = (p-1)p$ we deduce that $|K|\le p-1$, a contradiction.
Thus $T$ acts semiregularly on $A$.
By \cite[II, 7.3 Satz ]{HuppertI} and its proof, identifying $A$ with the underlying additive group of $\F_{p^2}$ we deduce that $T$ is a subgroup of the group of automorphism of $\F_{p^{2}}$ given by multiplication by non-zero elements of $\F_{p^2}$ and hence $T$ is a cyclic group of order dividing $p^2-1$. This cyclic group is unique up to conjugacy in $\Aut(A)$. \end{proof}

Let again $A = C_p \times C_p$ for some prime $p$. By Lemma~\ref{InequalitiesAbelianL} we know that when studying the possibly non-empty $\mathcal{E}(\Aut(A), K, A)$ for $K$ abelian we can restrict ourself to the study of $K$ being of order bigger than $p$ and non-divisible by $p$. Then, by Lemma~\ref{Dichotomy}, $K$ is contained, up to conjugacy, in one of two maximal abelian subgroups $\mathcal{C}$ or $\mathcal{D}$ of $\Aut(A)$.
Here $\mathcal{C}$ is cyclic of order $p^2-1$ and acting regularly on the non-trivial elements of $A$, while $\mathcal{D}$ stabilizes two cyclic subgroups of $A$ of order $p$ and is isomorphic to $C_{p-1} \times C_{p-1}$.
In both cases $K$ does not contain the centre of $\Aut(A)$ by Lemma~\ref{InequalitiesAbelianL}.\eqref{NotMultipleOfqL}.
Calculating all such $K$ is quite straightforward.
Unfortunately we have not been able to systematically produce $\mathcal{E}(\Aut(A), K, A)$ for all such $K$.
We now provide all non-trivial $\mathcal{E}(\Aut(A), K, A)$ for $p \leq 19$.
The calculations have been performed using the computer algebra system \texttt{GAP} \cite{GAP}. The inequalities were solved using \texttt{4ti2} \cite{4ti2}, or rather the \texttt{GAP}-package \cite{4ti2Interface} providing an interface to use it.

We explain how to read the tables.
In Table~\ref{TableCyclic} we list the subgroups $K$ of $\mathcal{C}$ with non-empty $\mathcal{E}(\Aut(A), K, A)$.
As each $K$ is determined by its order we simply display its isomorphism type.
We represent the elements $f\in \mathcal{E}(\Aut(A), K, A)$ as follows:
We fix a generator $c$ of $\mathcal{C}$ and set $r=[\mathcal{C}:K]$.
Then the $K$-classes of $N\setminus \{1\}$ are $X,X^c,X^{c^2},\dots,X^{c^{r-1}}$ for any given $K$-class $X$.
We describe $f$ by displaying the list $(f(X),f(X^c),f(X^{c^2}),\dots,f(X^{c^{r-1}}))$.
This determines $f$ up to the election of $X$, or equivalent up to a cyclic permutation of the list.
We hence list the values of $f$ only up to cyclic permutations.
Of course choosing another generator of $\mathcal{C}$ will change the ordering of the values, but all these orderings can be read of from the same list of values.

In Table~\ref{TableNonCyclic} we display the non-trivial outputs in case that $K$ is not cyclic and hence we may assume that it is a subgroup of $\mathcal{D}$.
It turns out that in all cases $\mathcal{D}/K$ is cyclic.
So, as in the previous case, we list tuples $(f(X),f(X^c),f(X^{c^2}),\dots,f(X^{c^{r-1}}))$ with $c$ a generator of $\mathcal{D}/K$ and we only list tuples up to cyclic permutation.
Here however the isomorphism type of $K$ is not sufficient to describe it uniquely and so we provide explicit generators. Here it is understood that after writing $A=\GEN{n}\times \GEN{m}$ such that $\GEN{n}$ and $\GEN{m}$ are stabilized by $\mathcal{D}$, the type $(a,b),(c,d)$ of $K$ implies that it is generated by the automorphisms $(n,m) \mapsto (n^a,m^b)$ and $(n,m) \mapsto (n^c, m^d)$.

\begin{table}[h]
\begin{tabular}{l|l|l|l}
$p$ & $K$ & $\mathcal{C}/K$ & $f$  \\ \hline \hline
7 & $C_{16}$ & $C_3$ & \; $(-1,2,0)$ \\ \hline
11 & $C_{24}$ & $C_5$ & \; $(-2,2,1,0,0), (-1,1,1,0,0) $ \\ \hline
13 & $C_{28}$ & $C_6$ & \; $(-1,1,1,0,0,0), (-1,0,1,1,0,0)$ \\
 & $C_{42}$ & $C_4$ & \; $(-1,1,1,0)$ \\
 & $C_{56}$ & $C_3$ & \; $(-2,2,1),(-1,1,1)$ \\ \hline
17 & $C_{36}$ & $C_8$ & \; $(-1,1,1,0,0,0,0,0)$ \\
 & $C_{72}$ & $C_4$ & \; $(-1,1,1,0),(-1,0,1,1)$ \\ \hline
19 & $C_{40}$ & $C_9$ & $\matriz{{l} (-2,-2,2,0,2,0,0,0,1), (-1,-1,1,0,1,0,0,0,1), \\
                        (-1,1,0,1,0,0,0,0,0)}$ \\
 & $C_{60}$ & $C_6$ & \; $(-1,1,1,0,0,0),(-1, 1, 0, 0, 1, 0) $ \\
 & $C_{120}$ & $C_3$ & \; $(-2,3,0), (-1,1,1), (-1,2,0)$
\end{tabular}
\caption{\label{TableCyclic}
	List of all $f \in \mathcal{E}(\Aut(C_p\times C_p), K, C_p \times C_p)$ with $K$ cyclic, up to cyclic permutation.}
\end{table}

\begin{table}[h]
\begin{tabular}{l|l|l|l}
$p$ & $K$ & $\mathcal{D}/K$ & \; $f$  \\ \hline \hline
13 & $(6,3),(1,5), C_{12} \times C_4$ & $C_3$ & \; $(-1,2,0)$ \\ \hline
17 & $(3,9),(1,16), C_{16} \times C_2$ & $C_8$ & \; $ (-1, 1, 1, 0, 0, 0, 0 ,0)$ \\
 & $(3,11),(1,16), C_{16} \times C_2$ & $C_8$ &\; $(-1, 1,1,0,0,0,0,0)$  \\
 & $(3,10), (1,13), C_{16} \times C_4$ & $C_4$ & \; $(-1, 1, 1, 0)$ \\
 & $(3,9), (1,13), C_{16} \times C_4$ & $C_4$ & \; $(-1, 1, 1, 0)$ \\ \hline
19 & $(2,9),(1,18), C_{18} \times C_2$  & $C_9$ & \; $(-1, 1,0, 1, 0, 0, 0,0,0)$ \\
 & $(2,4),(1,18), C_{18} \times C_2$  & $C_9$ & \; $(-1, 1,0, 1, 0, 0, 0,0,0)$ \\
 & $(2,6),(1,18), C_{18} \times C_2$  & $C_9$ & \; $(-1, 1,0, 1, 0, 0, 0,0,0)$ \\
 & $(2,15),(1,7), C_{18} \times C_3$  & $C_6$ & \; $(-1, 0, 1, 0, 1, 0)$ \\
 & $(2,4),(1,7), C_{18} \times C_3$  & $C_6$ & \; $(-1, 0, 1, 0, 1, 0)$ \\
 & $(2,6),(7,18), C_{18} \times C_6$  & $C_3$ & \; $(-1, 2, 0), (-1, 1, 1)$ \\ \hline
\end{tabular}
\caption{\label{TableNonCyclic}
List of all $f \in \mathcal{E}(\Aut(C_p \times C_p), K, C_p \times C_p)$ with $K$ stabilizing two cyclic subgroups in $C_p \times C_p$, up to cyclic permutation.}
\end{table}

Using these tables and our construction one can easily construct groups where our strategy fails.
The smallest such group would be by choosing $N = C_{7 \cdot 13} \times C_{7 \cdot 13}$, $K_7 = C_{16}$ and $K_{13} = C_{12} \times C_4$ (the first ones in Tables~\ref{TableCyclic} and \ref{TableNonCyclic}).
In this case $T_7/K_7 \cong T_{13}/K_{13} \cong C_3$.
By the values of $f_p$ and $f_q$ given in the tables we conclude that in this case our construction can be carried through.

If one assumes that both $K_p$ and $K_q$ in the construction are cyclic then we obtain groups of the type defined in Notation~\ref{Gpqd}.
More precisely we get that there are groups of type $G_3(7,19)$, $G_6(13,19)$, $G_4(13,17)$ and $G_3(13,19)$ which are all counterexamples to our strategy.
Actually one of those of first type is the counterexample to the Zassenhaus Conjecture in \cite{EiseleMargolis}.
But also positive results can be read of from the tables, it follows i.e. that if $N \cong C_{11\cdot p} \times C_{11 \cdot p}$ with $p \leq 19$ and $G/N$ is abelian then Sehgal's Problem has a positive solution for $G$ and $N$.

\begin{remark}\label{RemFinal}
We explain why the construction in this section provided good candidates for counterexamples to the Zassenhaus Conjecture, which lead Eisele and Margolis to prove that actually such counterexamples exist.

Let $C$ be a cyclic group of order $n$ and assume we want to find a unit $u$ in $\V(\Z G)$ of order $n$ which is a counterexample to the conjecture. By the well known double action formalism, as explained in \cite[Section 2]{EiseleMargolis} or \cite[38.6]{Sehgal1993}, this is equivalent to showing that there exists a $\Z(G \times C)$-module $M$ of rank $|G|$ such that $M$ is free as $\Z G$-module and such that the character associated to $M$ takes negative values on certain elements.

Now assume we have a non-trivial distribution of partial augmentations which is in the output of Algorithm~\ref{AlgorithmLocalGN} with input $G$ and $N$. Then these partial augmentations also satisfy the constraints of the HeLP method by Remark~\ref{HeLPRemark} and this implies the existence of a $\Q(G \times C)$-module of rank $|G|$ which is free as $\Q G$-module and whose associated character has negative values. On the other hand this also implies there is a $\Z(N \times C)$-lattice of rank $|G|$ which is locally free as $\Z N$-lattice and whose associated character has negative values by \cite[Theorem 3.4]{MargolisdelRioCW1}, \cite[Theorem 3.3]{CliffWeiss}.

This observation and the concrete construction described above gave rise to the idea that such distributions of partial augmentations might correspond to  semilocal counterexamples to the Zassenhaus Conjecture, by "matching" the two modules described above. As shown in \cite[Section 5]{EiseleMargolis} this is indeed the case and under a few more conditions this even allows to find global counterexamples \cite[Section 6]{EiseleMargolis}. Hence the algorithms described here might e.g. be used to further investigate the boundary between groups satisfying the Zassenhaus Conjecture and not satisfying it.
\end{remark}

\medskip

\textbf{Acknowledgement:} We are thankful to F. Eisele for fruitful discussions.

\bibliographystyle{amsalpha}
\bibliography{CW}

\providecommand{\bysame}{\leavevmode\hbox to3em{\hrulefill}\thinspace}
\providecommand{\MR}{\relax\ifhmode\unskip\space\fi MR }
\providecommand{\MRhref}[2]{%
  \href{http://www.ams.org/mathscinet-getitem?mr=#1}{#2}
}
\providecommand{\href}[2]{#2}
\begin{thebibliography}{MRSW87}

\bibitem[CMdR13]{CaicedoMargolisdelRio2013}
M.~Caicedo, L.~Margolis, and \'A. del R\'{\i}o, \emph{Zassenhaus conjecture for
  cyclic-by-abelian groups}, J. Lond. Math. Soc. (2) \textbf{88} (2013), no.~1,
  65--78.

\bibitem[CW00]{CliffWeiss}
G.~Cliff and A.~Weiss, \emph{Finite groups of matrices over group rings},
  Trans. Amer. Math. Soc. \textbf{352} (2000), no.~1, 457--475.

\bibitem[EM17]{EiseleMargolis}
F.~Eisele and L.~Margolis, \emph{A counterexample to the first {Z}assenhaus
  conjecture}, preprint, arxiv.org/abs/1710.08780 (2017), 33 pages.

\bibitem[GAP16]{GAP}
The GAP~Group, \emph{{GAP -- Groups, Algorithms, and Programming, Version
  4.8.1}}, 2016, http://www.gap-system.org.

\bibitem[Gut15]{4ti2Interface}
S.~Gutsche, \emph{4ti2 interface},
  http://homalg-project.github.io/homalg\_project/4ti2Interface, 2015,
  {G}{A}{P} package, {V}ersion 2015.11.06.

\bibitem[Her07]{HertweckBrauer}
M.~Hertweck, \emph{Partial augmentations and {B}rauer character values of
  torsion units in group rings}, arXiv.org/abs/0612429v2, 2004 - 2007.

\bibitem[Her08]{Hertweck2008}
\bysame, \emph{Torsion units in integral group rings of certain metabelian
  groups}, Proc. Edinb. Math. Soc. (2) \textbf{51} (2008), no.~2, 363--385.

\bibitem[Hup67]{HuppertI}
B.~Huppert, \emph{Endliche {G}ruppen. {I}}, Die Grundlehren der Mathematischen
  Wissenschaften, Band 134, Springer-Verlag, Berlin-New York, 1967.

\bibitem[JdR16]{GRG1}
E.~Jespers and {\'A}.~del R{\'{\i}}o, \emph{{Group ring groups. Volume 1:
  Orders and generic constructions of units}}, Berlin: De Gruyter, 2016.

\bibitem[LP89]{LutharPassi1989}
I.~S. Luthar and I.~B.~S. Passi, \emph{Zassenhaus conjecture for {$A_5$}},
  Proc. Indian Acad. Sci. Math. Sci. \textbf{99} (1989), no.~1, 1--5.

\bibitem[Mar17]{MargolisHertweck}
L~Margolis, \emph{A {T}heorem of {H}ertweck on $p$-adic conjugacy},
  http://arxiv.org/abs/1706.02117 (2017), 11 pages.

\bibitem[MdR17a]{MargolisdelRioCW1}
L.~Margolis and {\'A}.~del R{\'i}o, \emph{Cliff-{W}eiss inequalities and the
  {Z}assenhaus {C}onjecture}, preprint, arxiv.org/abs/1706.02483 (2017), 21
  pages.

\bibitem[MdR17b]{MargolisdelRioPAP}
\bysame, \emph{Partial augmentations power property: A {Z}assenhaus
  {C}onjecture related problem}, preprint, arxiv.org/abs/1706.04787 (2017), 13
  pages.

\bibitem[MRSW87]{MarciniakRitterSehgalWeiss1987}
Z.~Marciniak, J.~Ritter, S.~K. Sehgal, and A.~Weiss, \emph{Torsion units in
  integral group rings of some metabelian groups. {II}}, J. Number Theory
  \textbf{25} (1987), no.~3, 340--352.

\bibitem[MS00]{MarciniakSehgal2000}
Z.~S. Marciniak and S.~K. Sehgal, \emph{Torsion matrices over abelian group
  rings}, J. Group Theory \textbf{3} (2000), no.~1, 67--75.

\bibitem[Seh93]{Sehgal1993}
S.~K. Sehgal, \emph{Units in integral group rings}, Pitman Monographs and
  Surveys in Pure and Applied Mathematics, vol.~69, Longman Scientific \&
  Technical, Harlow, 1993.

\bibitem[tt]{4ti2}
4ti2 team, \emph{4ti2---{A} software package for algebraic, geometric and
  combinatorial problems on linear spaces}, {A}vailable at www.4ti2.de.

\bibitem[Wei88]{Weiss1988}
A.~Weiss, \emph{Rigidity of {$p$}-adic {$p$}-torsion}, Ann. of Math. (2)
  \textbf{127} (1988), no.~2, 317--332.

\bibitem[Wei91]{Weiss1991}
\bysame, \emph{Torsion units in integral group rings}, J. Reine Angew. Math.
  \textbf{415} (1991), 175--187.

\end{thebibliography}

\noindent
Departamento de Matemáticas, Universidad de Murcia, 30100 Murcia, Spain\newline
email: leo.margolis@um.es, adelrio@um.es

\end{document}